
\documentclass[11pt, reqno]{amsart}

\usepackage[utf8]{inputenc}
\usepackage[T1]{fontenc}
\usepackage[english]{babel}
\usepackage{amsmath, amsthm}
\usepackage{ae}
\usepackage{icomma}
\usepackage{units}
\usepackage{color}
\usepackage{graphicx}
\usepackage{bbm}
\usepackage{caption}
\usepackage{array}
\usepackage[hmarginratio=1:1]{geometry}
\usepackage[hyphens]{url}
\usepackage[pdfpagelabels=false, hidelinks]{hyperref}
\usepackage{mathrsfs}
\usepackage{amssymb}
\usepackage{placeins} 
\usepackage{tikz}
\usetikzlibrary{patterns}
\usepackage{float} 
\usepackage[nodayofweek]{datetime}
\usepackage{enumitem}
\usepackage{mathtools}
\usepackage[numbers, sort]{natbib}
\usepackage{dsfont}
\usepackage{ifthen}

\newcommand{\R}{\ensuremath{\mathbb{R}}}
\newcommand{\N}{\ensuremath{\mathbb{N}}}

\newcommand{\Z}{\ensuremath{\mathbb{Z}}}

\DeclareMathOperator{\Tr}{Tr}

\let\phi\varphi

\newcommand{\1}{\ensuremath{\mathds{1}}}

\DeclareMathOperator{\supp}{supp}
\newcommand{\Haus}{\ensuremath{\mathcal{H}}}

\newtheorem{theorem}{Theorem}[section]

\newtheorem{lemma}[theorem]{Lemma}
\newtheorem{proposition}[theorem]{Proposition}
\newtheorem{corollary}[theorem]{Corollary}

\numberwithin{theorem}{section}
\numberwithin{definition}{section}
\theoremstyle{remark}
\newtheorem{remark}[theorem]{Remark}

\pagenumbering{arabic}

\newcommand{\limplus}{{\mathchoice{\vcenter{\hbox{$\scriptstyle +$}}}
  {\vcenter{\hbox{$\scriptstyle +$}}}
  {\vcenter{\hbox{$\scriptscriptstyle +$}}}
  {\vcenter{\hbox{$\scriptscriptstyle +$}}}
}}
\newcommand{\limminus}{{\mathchoice{\vcenter{\hbox{$\scriptstyle -$}}}
  {\vcenter{\hbox{$\scriptstyle -$}}}
  {\vcenter{\hbox{$\scriptscriptstyle -$}}}
  {\vcenter{\hbox{$\scriptscriptstyle -$}}}
}}
\newcommand{\limpm}{{\mathchoice{\vcenter{\hbox{$\scriptstyle \pm$}}}
  {\vcenter{\hbox{$\scriptstyle \pm$}}}
  {\vcenter{\hbox{$\scriptscriptstyle \pm$}}}
  {\vcenter{\hbox{$\scriptscriptstyle \pm$}}}
}}

\begin{document}

\title[On the error in the two-term Weyl formula]{On the error in the two-term Weyl formula\\ for the Dirichlet Laplacian}

\author[R. L. Frank]{Rupert L. Frank}
\address{\textnormal{(R. L. Frank)} Mathematisches Institut, Ludwig-Maximilans Universit\"at M\"unchen, Theresinstr. 39, 80333 M\"unchen, Germany, and Department of Mathematics, California Institute of Technology, Pasadena, CA 91125, USA}
\email{r.frank@lmu.de, rlfrank@caltech.edu}

\author[S. Larson]{Simon Larson}
\address{\textnormal{(S. Larson)} Department of Mathematics, California Institute of Technology, Pasadena, CA 91125, USA}
\email{larson@caltech.edu}

\subjclass[2010]{35P20}
\keywords{Dirichlet Laplace operator, Semiclassical asymptotics, Weyl's law.}

\thanks{\copyright\, 2020 by the authors. This paper may be
reproduced, in its entirety, for non-commercial purposes.\\
U.S.~National Science Foundation grant DMS-1363432 (R.L.F.) and Knut and Alice Wallenberg Foundation grant KAW~2018.0281 (S.L.) is acknowledged. The authors also wish to thank Institut Mittag-Leffler, where part of this work was carried out.}

\begin{abstract} 
  We study the optimality of the remainder term in the two-term Weyl law for the Dirichlet Laplacian within the class of Lipschitz regular subsets of $\R^d$. In particular, for the short-time asymptotics of the trace of the heat kernel we prove that the error term cannot be made quantitatively better than little-$o$ of the second term.
\end{abstract}

\maketitle

\section{Introduction and main results}

Let $-\Delta_\Omega$ denote the Dirichlet Laplace operator on an open set $\Omega\subset \R^d$, which is defined as a self-adjoint operator in $L^2(\Omega)$ through the quadratic form $u\mapsto \int_\Omega |\nabla u(x)|^2\, dx $ with form domain $H^1_0(\Omega)$. 
If the measure of $\Omega\subset \R^d$ is finite the spectrum of $-\Delta_\Omega$ is discrete and consists of an infinite number of positive eigenvalues accumulating only at infinity. Here the eigenvalues are denoted by
\begin{equation*}
    0<\lambda_1 \leq \lambda_2 \leq \lambda_3 \leq \ldots, 
\end{equation*}  
where each eigenvalue is repeated according to its multiplicity.

The study of the asymptotic behaviour of $\lambda_k$ as $k\to \infty$ is a classical topic in spectral theory. The most fundamental result in this area is the following celebrated result going back to Weyl~\cite{WeylAsymptotic} which states that
\begin{equation}\label{eq:Weyls law}
  \#\{\lambda_k <\lambda\} = \frac{\omega_d}{(2\pi)^d}|\Omega|\lambda^{d/2}+o(\lambda^{d/2})\quad \mbox{as }\lambda \to \infty\, .
\end{equation}
Here and in what follows $\omega_d$ denotes the volume of the $d$-dimensional unit ball. That~\eqref{eq:Weyls law} holds for any open set $\Omega\subset \R^d$ of finite measure was obtained in~\cite{zbMATH03421305}.

If the set $\Omega$ has certain geometric properties a refined version of the asymptotic expansion~\eqref{eq:Weyls law} holds, namely, 
\begin{equation}\label{eq:Ivrii_asymptotics}
  \#\{\lambda_k<\lambda\} = \frac{\omega_d}{(2\pi)^d}|\Omega|\lambda^{d/2}-\frac{1}{4}\frac{\omega_{d-1}}{(2\pi)^{d-1}}\Haus^{d-1}(\partial\Omega)\lambda^{(d-1)/2}+o(\lambda^{(d-1)/2}) \quad \mbox{as }\lambda \to \infty\, . 
\end{equation}
Here and in what follows $\Haus^{d-1}(A)$ denotes the $(d-1)$-dimensional Hausdorff measure of a set $A\subset \R^d$.
This refinement of Weyl's law was conjectured already by Weyl in~1913~\cite{MR1580880}. A satisfactory answer remained elusive for several decades, but in 1980 Ivrii~\cite{MR575202} proved the conjecture under the assumption that $\Omega$ is smooth and the measure of the periodic billiards in $\Omega$ is zero.

\subsection{Main results} In this paper our focus is on the remainder term in~\eqref{eq:Ivrii_asymptotics}, or rather the corresponding remainder term in certain averages of the counting function. The greater part of our analysis concerns the remainder term in the Abel-type average
\begin{equation}\label{eq:Brown asymptotics}
  \Tr(e^{t\Delta_\Omega}) = \sum_{k\geq 1}e^{-t\lambda_k} = (4\pi t)^{-d/2}\biggl(
    |\Omega|- \frac{\sqrt{\pi t}}{2}\Haus^{d-1}(\partial\Omega)+o(\sqrt{t})
  \biggr) \quad \mbox{as }t\to 0^\limplus\, , 
\end{equation}
that is, the short-time asymptotics of the trace of the heat kernel. The asymptotics~\eqref{eq:Brown asymptotics} can be obtained from~\eqref{eq:Ivrii_asymptotics} by integration in $\lambda$. However, it is not possible to reverse this process and deduce~\eqref{eq:Ivrii_asymptotics} from~\eqref{eq:Brown asymptotics} alone. 

Intuitively, the averaging of the eigenvalues should have a regularizing effect on the asymptotics and thus one expects~\eqref{eq:Brown asymptotics} to be valid under less restrictive geometric assumptions than those needed for~\eqref{eq:Ivrii_asymptotics}. Under the weak assumption that the boundary of $\Omega$ is Lipschitz regular the validity of~\eqref{eq:Brown asymptotics} was proved by Brown~\cite{MR1134755}. In the same paper Brown remarks without proof that the error term $o(\sqrt{t})$ cannot be replaced by $o(t^{1/2+\epsilon}), $ for any $\epsilon>0$. The main theorem of this paper goes in the same direction as this remark, and in fact contains the remark as a particular case. However, our result claims substantially more. While Brown's remark concerns the impossibility of improving the error term on the algebraic scale, our result states that it is impossible to make any quantitative improvement whatsoever. Specifically, we prove the following:
\begin{theorem}\label{thm:MainHeat}
   Let $g\colon \R_\limplus \to \R$ be a non-negative function with $\lim_{t\to 0^\limplus}g(t)=0$.
   There exists an open, bounded, and connected set $\Omega\subset \R^d$ with Lipschitz regular boundary such that
\begin{equation}\label{eq:counter_example}
  \lim_{t\to 0^\limplus} \frac{(4\pi t)^{d/2}\Tr(e^{t\Delta_\Omega})-|\Omega|+ \frac{\sqrt{\pi t}}{2}\Haus^{d-1}(\partial \Omega)}{\sqrt{t}g(t)}=\infty\, .
\end{equation}
\end{theorem}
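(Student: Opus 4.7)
The plan is to build $\Omega$ as a smooth reference domain $\Omega_0$ to which many small Lipschitz boundary bumps are attached, arranged at a countable sequence of scales $r_n \downarrow 0$ and with multiplicities $N_n$ chosen adaptively in terms of the given $g$. The guiding principle is that a Lipschitz deformation of the boundary at length scale $r$ contributes to the Weyl remainder
\[
D_\Omega(t) := (4\pi t)^{d/2}\Tr(e^{t\Delta_\Omega}) - |\Omega| + \tfrac{\sqrt{\pi t}}{2}\Haus^{d-1}(\partial\Omega)
\]
an amount of absolute size $O(r^d)$, attained at time $t \sim r^2$. Hence $N_n$ copies of a bump at scale $r_n$ produce a burst in $D_\Omega$ of size $\sim N_n r_n^d$ near $t\sim r_n^2$.

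I would first fix a nontrivial compactly supported Lipschitz profile $\sigma\colon\R^{d-1}\to\R$ and consider the perturbed half-space $H_\sigma=\{(y,z):z>\sigma(y)\}$ alongside $H_0=\{z>0\}$. The \emph{bump signature}
\[
F_\sigma(s) := (4\pi s)^{d/2}\!\!\int_{\R^d}\!\bigl(p_{H_\sigma}(s,x,x)-p_{H_0}(s,x,x)\bigr)dx + \int_{\R^{d-1}}\sigma\,dy + \tfrac{\sqrt{\pi s}}{2}\!\int_{\R^{d-1}}\!\bigl(\sqrt{1+|\nabla\sigma|^2}-1\bigr)dy
\]
satisfies $F_\sigma(s)=o(\sqrt s)$ as $s\to 0^\limplus$ (by applying Brown's theorem to a bounded truncation) and obeys the rescaling law $F_{r\sigma(\cdot/r)}(t)=r^dF_\sigma(t/r^2)$. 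For a generic $\sigma$, $F_\sigma$ is a continuous, not identically zero function on $(0,\infty)$, so there is an open interval $J\subset(0,\infty)$ on which $|F_\sigma|\geq c_*>0$.

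Given $g$, I would choose inductively the sequences $r_n\downarrow 0$ and $N_n\in\N$ obeying: (i) $\sum_n N_n r_n^{d-1}<\infty$, so the total added $\Haus^{d-1}$-measure is finite; (ii) $N_n r_n^{d-1}/\sup_{t\in r_n^2 J} g(t)\to\infty$, so each scale beats the prescribed rate by a factor going to $\infty$; (iii) the dilated intervals $r_n^2 J$ cover every sufficiently small $t$, so that every such $t$ has a resonant scale; and (iv) the bumps are placed on $\partial\Omega_0$ in pairwise well-separated patches, at mutual distances much larger than any relevant $\sqrt{t}$. The hypothesis $g(t)\to 0$ (with no rate) is precisely what allows (i) and (ii) to be compatible. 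The domain $\Omega$ is then obtained by gluing to the selected patches of $\partial\Omega_0$, for each $n$, the $N_n$ rescaled copies $r_n\sigma(\,\cdot\,/r_n)$; all bumps share the Lipschitz constant of $\sigma$ and are mutually disjoint, so $\Omega$ is open, bounded, connected, and Lipschitz.

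The analytic heart of the proof is a localization estimate derived from Kac's principle of not feeling the boundary together with standard Brownian-bridge bounds:
\[
D_\Omega(t) = D_{\Omega_0}(t) + \sum_{m\ge 1}N_m r_m^d\,F_\sigma(t/r_m^2) + E(t),
\]
where $E(t)$ decays super-polynomially once the bumps are separated by distances much larger than $\sqrt{t}$. For each small $t$, condition (iii) provides an index $n$ with $t/r_n^2\in J$; the $m=n$ summand then contributes at least $c_* N_n r_n^d$ in absolute value, so together with (ii) one obtains $|D_\Omega(t)|/(\sqrt{t}\,g(t))\to\infty$ as $t\to 0^\limplus$---provided the cross-scale sums $\sum_{m\ne n} N_m r_m^d F_\sigma(t/r_m^2)$, together with $D_{\Omega_0}(t)$ and $E(t)$, are subdominant. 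The main technical obstacle is exactly the control of these cross-scale sums: for $r_m>r_n$ one must exploit the small-$s$ decay of $F_\sigma$, and for $r_m<r_n$ its behavior for large $s$. Balancing this against (i)--(iv) is what drives the delicate, typically super-geometric, inductive choice of $(r_n,N_n)$ from $g$, and is where the argument becomes quantitatively most demanding.
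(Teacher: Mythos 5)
Your geometric idea --- attaching to a smooth reference domain a sequence of mutually separated Lipschitz bumps of the same shape at scales $r_n\downarrow 0$ with multiplicities $N_n$ chosen adaptively from $g$ --- is essentially the construction in the paper (there, a square with triangular teeth of size $l_k$). The analytic route you propose, however, is genuinely different and, as written, has gaps that the paper's argument is designed to avoid.

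The paper never attempts the two-sided decomposition
$D_\Omega(t)=D_{\Omega_0}(t)+\sum_m N_m r_m^d F_\sigma(t/r_m^2)+E(t)$. Instead it uses domain monotonicity of the Dirichlet heat kernel in a one-sided way: $\Tr(e^{t\Delta_\Omega})\geq\Tr(e^{t\Delta_{\Omega_M}})$, where $\Omega_M$ is the domain with all but the first $M-1$ teeth removed. This replaces the quantity to be estimated by
\[
D_\Omega(t)\ \geq\ E_M(t)\ +\ \bigl(|\Omega_M|-|\Omega|\bigr)\ +\ \tfrac{\sqrt{\pi t}}{2}\bigl(\Haus^1(\partial\Omega)-\Haus^1(\partial\Omega_M)\bigr),
\]
where $E_M(t)$ is the two-term remainder of the \emph{truncated, polygonal} domain $\Omega_M$. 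A direct heat-kernel comparison with squares (Proposition~\ref{prop:errorbound}) then gives the uniform lower bound $E_M(t)\geq -(M+4|\Omega_M|-1)\,t$. This is a single, robust, sign-definite estimate: there are no cross-scale interaction terms to control and no sign ambiguity. The only remaining work (Proposition~\ref{prop: Aux1 integrals} and Corollary~\ref{cor: Aux2 sequences}) is to choose $l_k$ and a cutoff $M=M(t)$ so that the perimeter deficit $\sqrt t\sum_{k\geq M}l_k$ beats both $\sqrt t\,g(t)$ and the two negative terms $\sum_{k\geq M}l_k^2$ and $M t$.

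Three specific issues with your plan. First, the sign: you bound $|D_\Omega(t)|$, but the theorem needs $D_\Omega(t)\to+\infty$; you would have to argue $F_\sigma\geq 0$ (true if the bumps point outward, by domain monotonicity applied to $p_{H_\sigma}\geq p_{H_0}$), and you never do. Second, the ``resonant burst'' heuristic $N_nr_n^d$ at $t\sim r_n^2$ misidentifies the dominant mechanism. Since $F_\sigma(s)\sim \tfrac{\sqrt{\pi s}}{2}\int(\sqrt{1+|\nabla\sigma|^2}-1)\,dy$ as $s\to\infty$, the \emph{unresolved} scales $r_m\ll\sqrt t$ contribute $\sim\sqrt t\sum_{r_m\ll\sqrt t}N_mr_m^{d-1}$, i.e.\ precisely the perimeter deficit; this is what the paper exploits, and it generically exceeds the single resonant term. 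Third, the cross-scale sum for the \emph{resolved} scales $r_m\gg\sqrt t$ needs a quantitative rate in $F_\sigma(s)=o(\sqrt s)$ that Brown's theorem does not give; the paper sidesteps this entirely because the error bound $E_M(t)\geq -(M+C)t$, valid for all $t$, is obtained from Lemma~\ref{lem:1D kernel bound} and Corollary~\ref{cor:Heat kernel square} without any asymptotics. In short: your construction and intuition are on target, but your decomposition trades the paper's one clean inequality for three technical problems (sign, small-$s$ rate, separation/localization at infinitely many interacting scales) that you acknowledge but do not resolve, and that are avoidable.
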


In addition to considering the trace of the heat kernel~\eqref{eq:Brown asymptotics} we will consider the so-called Riesz means of order $\gamma\geq 0$, which are defined by
\begin{equation*}
  \Tr(-\Delta_\Omega-\lambda)_\limminus^\gamma = \sum_{\lambda_k<\lambda}(\lambda-\lambda_k)^\gamma\, , \quad \mbox{for }\lambda \geq 0 \, , 
\end{equation*}
where $x_\limpm = \frac{1}{2}(|x|\pm x)$. In particular, $\Tr(-\Delta_\Omega-\lambda)^0_\limminus= \#\{\lambda_k <\lambda\}$. The quantities $\Tr(-\Delta_\Omega-\lambda)_\limminus^\gamma$ become more and more well-behaved as $\gamma$ increases. Furthermore, by setting $\gamma = t\lambda$ and renormalising appropriately one obtains $\Tr(e^{t\Delta_\Omega})$ in the limit $\lambda\to \infty$.

Again by integration in $\lambda$ one can deduce a two-term asymptotic formula for the Riesz means from~\eqref{eq:Ivrii_asymptotics} which reads
\begin{equation}\label{eq: Reisz asymptotics}
  \Tr(-\Delta_\Omega-\lambda)_\limminus^\gamma = L_{\gamma, d}|\Omega|\lambda^{\gamma+d/2}- \frac{L_{\gamma, d-1}}{4}\Haus^{d-1}(\partial\Omega)\lambda^{\gamma + (d-1)/2} + o(\lambda^{\gamma+(d-1)/2})
\end{equation}
as $\lambda \to \infty$, where we abbreviate 
$$L_{\gamma, d}= \frac{\Gamma(\gamma+1)}{(4\pi)^{d/2}\Gamma(\gamma+1 +d/2)}\, .$$
As in the case of the trace of the heat kernel, the regularizing effect of averaging should help to prove the validity of~\eqref{eq: Reisz asymptotics} in greater generality for larger $\gamma$. In a recent paper by the authors~\cite{FrankLarson} it is proved that~\eqref{eq: Reisz asymptotics} is valid for $\gamma\geq 1$ as soon as the boundary of $\Omega$ is Lipschitz regular. This implies Brown's result~\cite{MR1134755} but not the other way around.

Our interest in the current topic was motivated by the question of sharpness for the result obtained in~\cite{FrankLarson}. As a corollary of Theorem~\ref{thm:MainHeat} we obtain that the remainder $o(\lambda^{\gamma+(d-1)/2})$ in the asymptotic expansion in~\eqref{eq: Reisz asymptotics} cannot be improved.
\begin{theorem}\label{thm:MainRiesz}
  Let $R\colon \R_\limplus\to \R$\/ be a non-negative function with $\lim_{\lambda\to \infty}R(\lambda)=0$. There exists an open, bounded, and connected set $\Omega\subset \R^d$ with Lipschitz regular boundary such that for all $\gamma\geq0$
\begin{equation*}
   \limsup_{\lambda\to \infty}\, \frac{\Tr(-\Delta_\Omega-\lambda)^\gamma_\limminus - L_{\gamma, d} |\Omega| \lambda^{\gamma+d/2} + \frac{L_{\gamma, d-1}}{4} \Haus^{d-1}(\partial\Omega) \lambda^{\gamma+(d-1)/2}}{\lambda^{\gamma+(d-1)/2}R(\lambda)}= \infty\, .
\end{equation*}
\end{theorem}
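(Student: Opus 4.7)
The plan is to deduce Theorem~\ref{thm:MainRiesz} from Theorem~\ref{thm:MainHeat} by realizing the heat-trace remainder as the Laplace transform of the Riesz-mean remainder. First I would make two harmless reductions on $R$: replacing it by its non-increasing envelope $\bar R(\lambda):=\sup_{\mu\geq\lambda}R(\mu)\geq R(\lambda)$ only shrinks the ratio inside the limsup (at positive values of the numerator), so I may assume $R$ is non-increasing and bounded; likewise I may assume $R>0$. Then I would set
\[
g(t):=\sqrt{R(1/\sqrt t)}+t^{1/4},
\]
which is non-negative with $g(t)\to 0$, and apply Theorem~\ref{thm:MainHeat} with this $g$ to obtain the Lipschitz set $\Omega$. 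Writing
\[
H(t):=(4\pi t)^{d/2}\Tr(e^{t\Delta_\Omega})-|\Omega|+\tfrac{\sqrt{\pi t}}{2}\Haus^{d-1}(\partial\Omega),
\]
Theorem~\ref{thm:MainHeat} gives $H(t)/(\sqrt t\,g(t))\to\infty$, and I claim that this single $\Omega$ works for every $\gamma\geq 0$ in Theorem~\ref{thm:MainRiesz}.

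Next I would argue by contradiction. Suppose the limsup in Theorem~\ref{thm:MainRiesz} is finite for some $\gamma_0\geq 0$; let $E_{\gamma_0}(\lambda)$ denote the remainder in~\eqref{eq: Reisz asymptotics}, so that $E_{\gamma_0}(\lambda)\leq C\lambda^{\gamma_0+(d-1)/2}R(\lambda)$ for all $\lambda\geq\lambda_0$. Applying the identity $e^{-t\mu}=\frac{t^{\gamma_0+1}}{\Gamma(\gamma_0+1)}\int_\mu^\infty e^{-t\lambda}(\lambda-\mu)^{\gamma_0}d\lambda$ to each eigenvalue $\mu=\lambda_k$, and computing the Laplace transforms of the two Weyl terms in~\eqref{eq: Reisz asymptotics} explicitly (they exactly reproduce the two leading terms of~\eqref{eq:Brown asymptotics}), I would obtain
\[
H(t)=\frac{(4\pi)^{d/2}t^{d/2+\gamma_0+1}}{\Gamma(\gamma_0+1)}\int_0^\infty e^{-t\lambda}E_{\gamma_0}(\lambda)\,d\lambda.
\]
Using the trivial bound $|E_{\gamma_0}(\lambda)|=O(1)$ on $[0,\lambda_0]$ and the hypothesized bound on $[\lambda_0,\infty)$, followed by the substitution $u=t\lambda$, this yields
\[
H(t)\leq\tilde C\sqrt t\int_0^\infty e^{-u}u^{\gamma_0+(d-1)/2}R(u/t)\,du+O(t^{d/2+\gamma_0+1}).
\]
Splitting the $u$-integral at $u=\sqrt t$ and using the monotonicity of $R$ (so that $R(u/t)\leq R(1/\sqrt t)$ on $\{u\geq\sqrt t\}$), the integral is bounded by a $\gamma_0$-dependent constant times $R(1/\sqrt t)+t^{(\gamma_0+(d+1)/2)/2}$. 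Dividing by $\sqrt t\,g(t)$, the construction of $g$ ensures each surviving term is $o(1)$: $R(1/\sqrt t)/g(t)\leq\sqrt{R(1/\sqrt t)}\to 0$, and every remaining polynomial-in-$t$ term is dominated by $t^{1/4}$ to a positive power since $d\geq 1$. Hence $H(t)/(\sqrt t\,g(t))\to 0$, contradicting Theorem~\ref{thm:MainHeat}.

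The principal obstacle is choosing a \emph{single} function $g$ that lets the contradiction go through uniformly for every $\gamma\in[0,\infty)$ and every admissible decay rate of $R$. The two-piece form $g(t)=\sqrt{R(1/\sqrt t)}+t^{1/4}$ is tailored precisely for this: the $\sqrt{R(1/\sqrt t)}$ part guarantees $R(1/\sqrt t)=o(g(t))$ no matter how slowly $R$ decays (even for exponentially decaying $R$), while the polynomial $t^{1/4}$ dominates all polynomial-in-$t$ errors produced by the Laplace integral uniformly in the Riesz-mean exponent and the dimension.
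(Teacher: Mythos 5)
Your proposal is correct and follows essentially the same route as the paper: deduce Theorem~\ref{thm:MainRiesz} from Theorem~\ref{thm:MainHeat} via the Laplace transform identity~\eqref{eq:Laplace transform}, arguing by contradiction. The only meaningful difference is the choice of auxiliary function fed into Theorem~\ref{thm:MainHeat}. The paper takes $g_0(t) = \int_0^\infty \mu^{(d-1)/2} R(\mu/t) e^{-\mu/2}\,d\mu$ (regularised by $\max\{g_0(t), t^{(d+1)/2}\}$), which is tailored to absorb the Laplace integral directly after bounding $\mu^{\gamma}e^{-\mu/2}$ by a constant; you instead first reduce to $R$ monotone and bounded, split the Laplace integral at $u=\sqrt t$, and use the $\sqrt{R(1/\sqrt t)}$ trick to place $g$ strictly between $R(1/\sqrt t)$ and $1$. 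Both choices of $g$ tend to zero, work uniformly in $\gamma$, and yield the contradiction (you get $H(t)/(\sqrt t\,g(t))\to 0$, the paper gets a finite limsup; either suffices). Your version trades the dominated-convergence argument for the extra monotonicity reduction and is slightly more elementary, but it is not a genuinely different proof.
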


As Theorem~\ref{thm:MainRiesz} concerns the $\limsup$ and not the limit, the result is somewhat weaker than what could be expected from Theorem~\ref{thm:MainHeat}. As such it is reasonable that it should follow from the same principal ideas. However, we are unable to give a direct proof. The main advantage in working with the trace of the heat kernel in comparison to $\Tr(-\Delta_\Omega-\lambda)_\limminus^\gamma$ lies in that we can utilize pointwise estimates for the heat kernel. Corresponding estimates for $\Tr(-\Delta_\Omega-\lambda)_\limminus^\gamma$ are much more delicate. However, it is not very surprising that one can deduce Theorem~\ref{thm:MainRiesz} from Theorem~\ref{thm:MainHeat}. Indeed, by the identity
\begin{equation}\label{eq:Laplace transform}
  \Tr(e^{t\Delta_\Omega}) = \frac{t^{1+\gamma}}{\Gamma(1+\gamma)}\int_0^\infty \Tr(-\Delta_\Omega-\lambda)_\limminus^\gamma e^{-t\lambda}\, d\lambda
\end{equation}
an asymptotic expansion for $\Tr(-\Delta_\Omega-\lambda)_\limminus^\gamma$ as $\lambda\to \infty$ implies a corresponding expansion for $\Tr(e^{t\Delta_\Omega})$ as $t\to 0^\limplus$. 
However, the use of~\eqref{eq:Laplace transform} in our proof of Theorem~\ref{thm:MainRiesz} leads to the $\limsup$ instead of the limit.

\subsection{Additional remarks} In a direction similar to that of Theorems~\ref{thm:MainRiesz} and~\ref{thm:MainHeat} it was shown by Lazutkin and Terman~\cite{lazutkin_estimation_1982} that the error term in~\eqref{eq:Ivrii_asymptotics} cannot be improved on the algebraic scale even among planar convex sets which in addition satisfy the assumptions of Ivrii's result. It is interesting to note that if one considers not the asymptotics of the counting function but of either $\Tr(-\Delta_\Omega-\lambda)^\gamma_\limminus$, with $\gamma \geq 1$, or $\Tr(e^{t\Delta_\Omega})$ one \emph{can} improve the error term on the algebraic scale within convex sets (in any dimension). That this is the case can be deduced from the following uniform inequality proved by the authors in~\cite{FrankLarson}.
  There exists a constant $C>0$ such that for any convex domain $\Omega \subset \R^d$ and all $\lambda \geq 0$,
  \begin{equation}\label{eq:uniform convex}
\begin{aligned}
  \biggl|\Tr(-\Delta_\Omega-\lambda)_\limminus - L_d|\Omega|\lambda^{1+d/2}&+ \frac{L_{d-1}}{4}\Haus^{d-1}(\partial\Omega)\lambda^{1+(d-1)/2}\biggr|\\
   &\leq 
   C \Haus^{d-1}(\partial\Omega)\lambda^{1+(d-1)/2}\bigl(r(\Omega)\sqrt{\lambda}\bigr)^{-1/11}\, 
\end{aligned}
\end{equation}
where $r(\Omega)$ denotes the inradius of $\Omega$.
The corresponding inequality for $\gamma >1$ and $\Tr(e^{t\Delta_\Omega})$ follows from~\eqref{eq:uniform convex} through integration in $\lambda$.

For $\Omega\subset \R^2$ a piecewise smooth domain with finitely many corners it is possible to refine the asymptotic expansions discussed by yet another term~\cite{MR921750, MR0201237, mazzeo_heat_2015, Fedosov_63,MR0217739} (see also~\cite{Fedosov_64} for a similar result in polyhedra in $\R^d$). Let the boundary of $\Omega$ be given  by the union of smooth curve segments $\gamma_j$, $j=1, \ldots, m$, parametrised by arc length and ordered so that $\gamma_j$ meets $\gamma_{j+1}$, and $\gamma_{m}$ meets $\gamma_1$. Let also $\alpha_j\in (0, 2\pi)$ denote the interior angle formed at the point $\gamma_j \cap \gamma_{j+1}$. Then, as $t\to 0^\limplus$, 
\begin{equation}\label{eq:heat trace corner term}
  \Tr(e^{t\Delta_\Omega}) = (4\pi t)^{-1}\biggl(
      |\Omega|- \frac{\sqrt{\pi t}}{2}\Haus^1(\partial\Omega) +\frac{t}{3}\sum_{j=1}^m\biggl[  \int_{\gamma_j}\kappa(s)\, ds + \frac{\pi^2-\alpha_j^2}{2 \alpha_j}\biggr] + o(t)
  \biggr)\, , 
\end{equation}
where $\kappa(s)$ denotes the curvature. It is clear that this expansion cannot extend to the class of Lipschitz sets, as the third term need not be finite.

There has been interesting work on Weyl asymptotics in very irregular sets, specifically sets with fractal boundary. In particular, a lot of work has been directed towards the optimal order of the error term in~\eqref{eq:Weyls law} in this setting. In 1979, it was conjectured by Berry~\cite{Berry1979, Berry1980} that if $\partial\Omega$ has Hausdorff dimension $d-1<d_\Haus\leq d$ then 
\begin{equation*}
    \#\{\lambda_k<\lambda\} = \frac{\omega_d}{(2\pi)^d}|\Omega|\lambda^{d/2}
    + O(\lambda^{d_\Haus/2}) \quad \mbox{as }\lambda\to \infty\, .
\end{equation*}   
However, it was shown by Brossard and Carmona in~\cite{MR834484} that as stated the conjecture of Berry needs to be modified and they suggested that the Hausdorff dimension of $\partial\Omega$ should be replaced by the Minkowski dimension. Subsequently, it was proved by Lapidus~\cite{Lapidus1991} that if $\partial\Omega$ has Minkowski dimension $d_\mathcal{M}$ and finite $d_\mathcal{M}$-dimensional Minkowski content, then the error in the Weyl formula is~$O(\lambda^{d_\mathcal{M}/2})$. We emphasize, however, that the sets considered here, while having non-trivial structure on all scales like fractals, are much more regular.

\section{Proof of Theorem~\ref{thm:MainHeat}}

This section is dedicated to an outline of the proof of Theorem~\ref{thm:MainHeat}. We defer the proof of two ingredients to Sections~\ref{sec:Proof of errorbound} and~\ref{sec: aux lemmas}. Our proof is based on the explicit construction of a Lipschitz domain $\Omega$ satisfying~\eqref{eq:counter_example} for a given function~$g$.

\subsection{Reduction to the two-dimensional case}
In order to simplify the construction we first show that it is sufficient to consider the two-dimensional case. Assume that Theorem~\ref{thm:MainHeat} is known in the case $d=2$. Let $g$ be as in the statement of Theorem~\ref{thm:MainHeat} and fix a Lipschitz regular open, bounded, and connected set $\Omega\subset \R^2$ satisfying~\eqref{eq:counter_example} with $g$ replaced by $\tilde g(t)=\max\{g(t), t^{1/2}\}$. We claim that  $\Omega'=\Omega \times (0, 1)^{d-2}$ satisfies~\eqref{eq:counter_example}. Clearly, $\Omega'$ is open, bounded, connected, and Lipschitz regular.

By the product structure of $\Omega'$, 
\begin{align*}
  \Tr(e^{t\Delta_{\Omega'}}) 
  &= \sum_{l\geq 1}e^{-t\lambda_l(\Omega')}
  =\sum_{j, k\geq 1} e^{-t(\lambda_k(\Omega)+\lambda_j((0, 1)^{d-2}))}
  \\
  & =
  \sum_{j, k\geq 1}e^{-t\lambda_k(\Omega)}e^{-t\lambda_j((0, 1)^{d-2})} =
  \Tr(e^{t\Delta_{\Omega}})\Tr(e^{t\Delta_{(0, 1)^{d-2}}})\, .
\end{align*}
Moreover, 
\begin{equation}\label{eq: Omega' volume}
  |\Omega'|=|\Omega|\quad \mbox{and}\quad \Haus^{d-1}(\partial\Omega') = \Haus^1(\partial\Omega) + 2(d-2)|\Omega|\, .
\end{equation}
It is not difficult to show that
\begin{equation}\label{eq: cube heat trace expansion}
  \Tr(e^{t\Delta_{(0, 1)^{d-2}}}) = (4\pi t)^{-(d-2)/2}\Bigl(1- \frac{\sqrt{\pi t}}{2}2(d-2)+O(t)\Bigr)\, .
\end{equation}
In fact, an explicit computation based on the Poision summation formula and the explicit formulas for the eigenfunctions and eigenvalues on the interval yields: 
\begin{lemma}\label{lem: heat kernel 1D}
   The Dirichlet heat kernel of $-\Delta_{(0, L)}$ evaluated on the diagonal is given by 
   \begin{align}\label{eq:1D kernel on diagonal}
     e^{t\Delta_{(0, L)}}(x, x) 
     &= 
     \frac{1}{\sqrt{4\pi t}}\sum_{m \in \Z}
     \Bigl(
     e^{-\frac{m^2 L^2}{t}}-e^{-\frac{(mL+x)^2}{t}}
     \Bigr)\,.
   \end{align}
\end{lemma}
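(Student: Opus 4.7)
The plan is to expand the heat kernel in the Dirichlet eigenbasis of $(0,L)$ and then convert the resulting spectral sum into its method-of-images dual by Poisson summation. The eigenfunctions and eigenvalues of $-\Delta_{(0,L)}$ are $\phi_n(x)=\sqrt{2/L}\,\sin(n\pi x/L)$ and $\lambda_n=n^2\pi^2/L^2$ for $n\geq 1$, so
$$
  e^{t\Delta_{(0,L)}}(x,x) = \frac{2}{L}\sum_{n\geq 1}e^{-n^2\pi^2 t/L^2}\sin^2(n\pi x/L).
$$
First I would apply $2\sin^2\theta=1-\cos(2\theta)$ and observe that the summand is even in $n$ and vanishes at $n=0$, which allows symmetrization of the sum over all of $\Z$:
$$
  e^{t\Delta_{(0,L)}}(x,x) = \frac{1}{2L}\sum_{n\in\Z}e^{-n^2\pi^2 t/L^2}\bigl(1-\cos(2n\pi x/L)\bigr).
$$

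Next I would split this into an unmodulated and a modulated Gaussian sum and apply the Poisson summation formula to each. Using the Gaussian Fourier transform $\widehat{e^{-ay^2}}(\xi)=\sqrt{\pi/a}\,e^{-\pi^2\xi^2/a}$ with $a=\pi^2 t/L^2$, together with the shift of the Fourier variable induced by the modulations $e^{\pm 2\pi i nx/L}$, one obtains
$$
  \sum_{n\in\Z}e^{-n^2\pi^2 t/L^2}=\frac{L}{\sqrt{\pi t}}\sum_{m\in\Z}e^{-m^2 L^2/t}
$$
and, after merging the two exponential contributions via $m\mapsto -m$,
$$
  \sum_{n\in\Z}e^{-n^2\pi^2 t/L^2}\cos(2n\pi x/L)=\frac{L}{\sqrt{\pi t}}\sum_{m\in\Z}e^{-(mL+x)^2/t}.
$$
Substituting these two identities into the previous display and collecting the prefactor $1/(2L)\cdot L/\sqrt{\pi t}=1/(2\sqrt{\pi t})=1/\sqrt{4\pi t}$ produces the claimed formula.

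There is no substantive obstacle here: the identity is the classical Jacobi theta transformation linking the long-time spectral expansion with the short-time image expansion of the heat kernel. The only minor bookkeeping is the parity symmetrization and the justification of Poisson summation, the latter being automatic since every summand is a Schwartz-class Gaussian in the summation variable. As a sanity check, the same formula can also be read off directly from the method of images by reflecting the free Euclidean heat kernel $(4\pi t)^{-1/2}e^{-(x-y)^2/(4t)}$ oddly across $\{0\}$ and $\{L\}$ and restricting to the diagonal, which provides the most immediate alternative derivation.
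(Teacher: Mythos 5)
Your proposal is correct and follows essentially the same route as the paper: expand the diagonal heat kernel in the Dirichlet eigenbasis and pass to the image sum via Poisson summation (the Jacobi theta transformation). The only cosmetic differences are that the paper first rescales to $L=1$ and applies Poisson summation directly to $g(\xi)=e^{-t\pi^2\xi^2}\sin^2(\pi\xi x)$ in one step, whereas you keep $L$ general and first expand $\sin^2$ by the half-angle identity before transforming the two resulting Gaussian sums separately.
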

\begin{proof}[Proof of Lemma~\ref{lem: heat kernel 1D}]
  By scaling it suffices to consider the case $L=1$. Recall that $\lambda_k= \pi^2k^2$ with the corresponding $L^2$-normalized eigenfunction $\phi_k(x)= \sqrt{2}\sin(\pi k x).$ By the explicit formulas for the eigenfunctions and eigenvalues
  \begin{equation*}
    e^{t\Delta_I}(x, x)=
    2\sum_{k=1}^\infty e^{-t \pi^2 k^2}\sin^2(\pi k x)=\sum_{k\in \Z}e^{-t \pi^2 k^2}\sin^2(\pi k x)\,.
  \end{equation*}
  Applying the Poisson summation formula
  \begin{equation*}
    \sum_{k\in \Z}g(k) = \sqrt{2\pi}\sum_{m\in \Z} \check g(2\pi m)
  \end{equation*}
  with $g(\xi)= e^{-t \pi^2 \xi^2}\sin^2(\pi \xi x)$ and using the fact that 
  \begin{equation*}
    \check g(y) = \frac{1}{\sqrt{2\pi}}\int_\R e^{-t \pi^2 \xi^2}\sin^2(\pi \xi x)
    e^{iy\xi}\,d\xi
    = \frac{1}{4\pi \sqrt{2 t}}\Bigl(2e^{-\frac{y^2}{4\pi^2 t}}
   -e^{-\frac{(y-2\pi x)^2}{4\pi^2 t}}-e^{-\frac{(y+2\pi x)^2}{4\pi^2 t}}\Bigr)\,
  \end{equation*}
  completes the proof of the lemma.
\end{proof}

By integrating~\eqref{eq:1D kernel on diagonal} one obtains
\begin{equation*}
  \Tr(e^{t\Delta_{(0, 1)}})= \int_0^1 e^{t\Delta_{(0, 1)}}(x, x)\,dx = (4\pi t)^{-1/2}\Bigl(1- \frac{\sqrt{\pi t}}{2}2+ O(e^{-1/t})\Bigr)\,, \quad \mbox{as } t\to 0^\limplus\,.
\end{equation*}
Thus, by the product structure of $(0, 1)^n$,
 \begin{align*}
   \Tr(e^{t\Delta_{(0, 1)^n}}) = (\Tr(e^{t\Delta_{(0, 1)}}))^n &=\Bigl[(4\pi t)^{-1/2}\Bigl(1- \frac{\sqrt{\pi t}}{2}2+O(e^{-1/t})\Bigr)\Bigr]^n\\
   &= (4\pi t)^{-n/2}\Bigl(1- \frac{\sqrt{\pi t}}{2}2n + O(t)\Bigr)\,,
 \end{align*}
 which is the claimed expansion~\eqref{eq: cube heat trace expansion}.

Combined with $(4\pi t)\Tr(e^{t\Delta_\Omega})=|\Omega|+O(\sqrt{t})$ by Brown~\cite{MR1134755} and~\eqref{eq: Omega' volume}, we find
\begin{align*}
  (4\pi t)^{d/2}\Tr(e^{t\Delta_{\Omega'}})&-|\Omega'|+ \frac{\sqrt{\pi t}}{2}\Haus^{d-1}(\partial\Omega')\\
  &=
  (4\pi t)\Tr(e^{t\Delta_\Omega})-|\Omega|+ \frac{\sqrt{\pi t}}{2}\Haus^1(\partial\Omega) + O(t)\,,
\end{align*}
so
\begin{align*}
  \lim_{t\to 0^\limplus}&\frac{(4\pi t)^{d/2}\Tr(e^{t\Delta_{\Omega'}})-|\Omega'|+ \frac{\sqrt{\pi t}}{2}\Haus^{d-1}(\partial\Omega')}{\sqrt{t}g(t)}\\
  &=
  \lim_{t\to 0^\limplus}
  \frac{(4\pi t)\Tr(e^{t\Delta_{\Omega}})-|\Omega|+ \frac{\sqrt{\pi t}}{2}\Haus^1(\partial\Omega)+O(t)}{\sqrt{t}\tilde g(t)} \frac{\tilde g(t)}{g(t)} =\infty\, , 
\end{align*}
where in the last step we used the choice of $\Omega$ and the fact that by construction $\tilde g(t)\geq g(t)$ and $t^{1/2} \leq \tilde g(t)$. This proves our claim and consequently reduces the proof of Theorem~\ref{thm:MainHeat} to the two-dimensional case.

\subsection{Geometric construction}

We construct a domain $\Omega\subset \R^2$ as follows. The idea is to begin with the square $Q_0=(0, 3)^2$ to which we add triangular teeth to the top edge, all separated from each other and of the same shape but of different sizes and all away from the vertical edges of the large square (see Figure~\ref{fig:counter example}).

\begin{figure}[h]
  \centering
  \begin{tikzpicture}[scale=1.45]

\clip (-0.2,-1.5) rectangle (10.2,1);

\edef \s {1};
\edef \h {0.52};

\draw[thick] (0,0)--(0,-1.2);
\draw[thick] (10,-1.2)--(10,0);
\draw[thick] (0,0)--(\s,0);
\foreach\k in{1,...,4}{
	\draw[thick] (\s, 0)--({\s+0.5*\h}, 0)--({\s+1.5*\h},\h)--({\s+2.5*\h},0);
	\draw[densely dotted] ({\s+0.5*\h},0)--({\s+2.5*\h},0);
	\draw[densely dotted] ({\s+0.5*\h},\h)--({\s+1.5*\h},\h);
	\draw[<->] ({\s+0.5*\h-0.025},0)--({\s+0.5*\h-0.025},\h);
	\node at ({\s+0.5*\h-0.15},0.25) {\scalebox{0.9}{$\tfrac{l_{\scalebox{0.6}{\k}}}{2}$}};
	\xdef\s{\s+2.5*\h};
	\xdef\h{\h/1.2};
}
\foreach\k in{5,...,15}{
	\draw[thick] (\s, 0)--({\s+0.5*\h}, 0)--({\s+1.5*\h},\h)--({\s+2.5*\h},0);
	\draw[densely dotted] ({\s+0.5*\h},0)--({\s+2.5*\h},0);
	\xdef\s{\s+2.5*\h};
	\xdef\h{\h/1.2};
}

\draw[dashed,thick] (\s, 0)--(9,0);
\draw[thick] (9,0)--(10,0);

\end{tikzpicture}
  \caption{A schematic illustration of the top part of the constructed set $\Omega$.}
  \label{fig:counter example}
\end{figure}
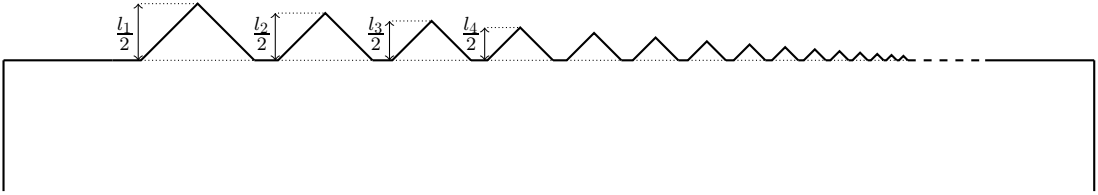

Precisely we consider
\begin{equation}\label{eq: Def Omega}
  \Omega = \{ (x_1, x_2)\in \R^2: 0<x_1<3,\ 0<x_2<H(x_1)\}\,,
\end{equation}
where
\begin{equation*}
  H(x)=3+\sum_{k\geq 1}l_k H_0((x-c_k)/l_k)\, , \quad \mbox{with }
  H_0(x)= 
  \begin{cases}
    x, & x\in [0, 1/2]\\
    1-x, & x\in (1/2, 1]\\
    0, & \mbox{otherwise}\, .  
  \end{cases}
\end{equation*}
Where the sequence $\{l_k\}_{k\geq 1}$ is positive and non-increasing with $\sum_k l_k \leq 1$, and the $c_k$ are chosen increasing and such that $1\leq c_k\leq \min\{c_{k+1}-l_k, 2\}$. This ensure that the supports of the different copies of $H_0$ are disjoint and at least a distance $1$ away from the vertical edges of the square $Q_0$. For instance, the sequence given by $c_1=1$ and $c_k = 1+\sum_{j=1}^{k-1}l_j$ for $k>1$ satisfies all the requirements. Note that at all points $x$ where $H$ is differentiable we have $H'(x)\in \{0, 1, -1\}$. Thus the set $\Omega$ defined by~\eqref{eq: Def Omega} is open, bounded, connected, and has Lipschitz regular boundary.

The idea is that the teeth at a scale much smaller than $\sqrt{t}$ should not have an essential contribution to the trace of the heat kernel. Moreover, the decrease of the area in removing such teeth is small relative to the decrease of length of the boundary. We emphasise that it is not the presence of corners that we are playing with to construct our counterexample. The effect that is essential for our construction is rather that the boundary has non-trivial structure on all scales. Heuristically, the construction should go through if the triangular teeth were replaced by versions where each of the three corners had been smoothed out. However, even though such a modification would make $H_0$ smooth, the function $H$ and the boundary of the set $\Omega$ would be remain merely Lipschitz due to all derivatives of order greater than $1$ blowing up as $l_k$ tends to zero. 

Let $\Omega_M$ be the domain described similarly to $\Omega$ by~\eqref{eq: Def Omega} but with the function $H$ replaced by $H_M$:
\begin{equation}\label{eq: def HM}
  H_M(x)=3+\sum_{\substack{k< M}}l_k H_0((x-c_k)/l_k)\, .
\end{equation}
That is, we remove all teeth after the $M$-th one. We note that $\Omega_{M'}\subset\Omega_M\subset \Omega$, if $M'<M$, $\cup_{M\geq 1}\Omega_M=\Omega$, and by construction
\begin{equation}\label{eq:Area length deficit}
  |\Omega|-|\Omega_M|= \sum_{k\geq M}\frac{l_k^2}{4}\, , 
  \qquad \qquad
  \Haus^{1}(\partial \Omega)-\Haus^{1}(\partial \Omega_M) = \sum_{k\geq M}(\sqrt{2}-1)l_k\, .
\end{equation} 

By monotonicity of Dirichlet eigenvalues under set inclusions
\begin{align*}
(4\pi t)\Tr(e^{t\Delta_\Omega})-&|\Omega|+ \frac{\sqrt{\pi t}}{2}\Haus^{1}(\partial \Omega)\\
&\geq (4\pi t)\Tr(e^{t\Delta_{\Omega_M}})-|\Omega|+ \frac{\sqrt{\pi t}}{2}\Haus^{1}(\partial \Omega)\\
&=
|\Omega_M|-|\Omega|+ \frac{\sqrt{\pi t}}{2}\bigl(\Haus^{1}(\partial\Omega)-\Haus^{1}(\partial \Omega_M)\bigr) + E_M(t)\, , 
\end{align*}
where we define
\begin{equation}\label{eq: EM definition}
  E_M(t)=4\pi t\Tr(e^{t\Delta_{\Omega_M}})-|\Omega_M|+\frac{\sqrt{\pi t}}{2}\Haus^{1}(\partial\Omega_M)\, .
\end{equation}
Our goal is to choose $\{l_k\}_{k\geq 1}$ and $M$ and to bound $E_M(t)$ in such a way that
\begin{equation*}
  \lim_{t\to 0^\limplus}\, \bigl(\sqrt{t}g(t)\bigr)^{-1}\Bigl(|\Omega_M|-|\Omega|+ \frac{\sqrt{\pi t}}{2}\bigl(\Haus^{1}(\partial\Omega)-\Haus^{1}(\partial \Omega_M)\bigr) + E_M(t)\Bigr)= \infty\, .
\end{equation*}
Equivalently, by~\eqref{eq:Area length deficit}, we want to achieve
\begin{equation}\label{eq:asymptotic_goal_2}
  \lim_{t\to 0^\limplus}g(t)^{-1}\biggl(
    \frac{\sqrt{\pi}(\sqrt{2}-1)}{2}\sum_{k\geq M}l_k-\frac{1}{4\sqrt{t}}\sum_{k\geq M}l_k^2+\frac{E_M(t)}{\sqrt{t}}
  \biggr)=\infty\, .
\end{equation}

Before we are able to conclude our proof we shall need to prove some auxiliary results. The first result that we shall need is a bound for $E_M(t)$. This is the content of Proposition~\ref{prop:errorbound} proved in the next section which states that
\begin{equation*}
  E_M(t)\geq -(M+4|\Omega_M|-1)t\, , \quad \mbox{for all }t>0\, .
\end{equation*}
Inserting this bound into~\eqref{eq:asymptotic_goal_2} yields
\begin{equation}\label{eq:final limit}
\begin{aligned}
  g(t)^{-1}\biggl(&
    \frac{\sqrt{\pi}(\sqrt{2}-1)}{2}\sum_{k\geq M}l_k-\frac{1}{4\sqrt{t}}\sum_{k\geq M}l_k^2+\frac{E_M(t)}{\sqrt{t}}
  \biggr)\\
  &\geq
   g(t)^{-1}\biggl(
    \frac{\sqrt{\pi}(\sqrt{2}-1)}{2}\sum_{k\geq M}l_k-\frac{1}{4\sqrt{t}}\sum_{k\geq M}l_k^2-(M+4|\Omega_M|-1)\sqrt{t}
  \biggr)\,.
\end{aligned}
\end{equation}
We claim that one can choose a sequence $l_k$ and a decreasing function $M(t)$ in such a manner that the quantity~\eqref{eq:final limit} tends to infinity as $t\to 0^\limplus$. Indeed, setting $\tau = \sqrt{t}$, and defining $G(\tau)=g(\tau^2)$ the existence of such a sequence follows from Corollary~\ref{cor: Aux2 sequences} proved in Section~\ref{sec: aux lemmas} below. 
Note that for any $l_k$ and $M$ provided by Corollary~\ref{cor: Aux2 sequences} one has necessarily $\lim_{t\to 0^\limplus}M(t)=\infty$ and therefore $(4|\Omega_M|-1)\sqrt{t}=o(M\sqrt{t})$. Thus for the $l_k$ and $M$ provided by Corollary~\ref{cor: Aux2 sequences} only the first term in~\eqref{eq:final limit} affects the limit. Contingent on us proving Proposition~\ref{prop:errorbound} and Corollary~\ref{cor: Aux2 sequences}, this completes the proof of Theorem~\ref{thm:MainHeat}.\qed

\section{An estimate for the error term}
\label{sec:Proof of errorbound}

Recall that
\begin{equation*}
  \Omega_M = \{(x_1, x_2)\in \R^2: 0<x_1<3,\ 0<x_2< H_M(x_1)\}\,
\end{equation*}
with $H_M$ given by~\eqref{eq: def HM}, and $E_M(t)$ is defined in~\eqref{eq: EM definition}.
Our goal in this section is to prove the following proposition:
\begin{proposition}\label{prop:errorbound}
  Let $\{l_k\}_{k\geq 1}$ be a non-negative sequence with $\sum_{k\geq 1}l_k \leq1$. For every $M\geq 1$ and $t>0$, 
  \begin{equation*}
  E_{M}(t)\geq - (M+ 4|\Omega_M|-1) t\, .
  \end{equation*}
\end{proposition}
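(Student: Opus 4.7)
The plan is to apply Dirichlet bracketing. Cutting $\Omega_M$ along its $M-1$ tooth bases separates it into the rectangular body $R=(0,3)^2$ and the $M-1$ triangular teeth $T_k$, and since imposing Dirichlet conditions on the cuts only increases the operator,
\[
\Tr(e^{t\Delta_{\Omega_M}}) \geq \Tr(e^{t\Delta_R}) + \sum_{k=1}^{M-1}\Tr(e^{t\Delta_{T_k}}).
\]
The rectangle trace factors by separation of variables as $\Tr(e^{t\Delta_R})=\Tr(e^{t\Delta_{(0,3)}})^2$, and each $T_k$ is an isoceles right triangle with legs of length $l_k/\sqrt 2$; reflecting $T_k$ across its hypotenuse unfolds it into a square $S_k$ of side $l_k/\sqrt 2$, and its Dirichlet spectrum consists of the reflection-antisymmetric modes of $S_k$, yielding
\[
\Tr(e^{t\Delta_{T_k}}) = \tfrac12\bigl[\Tr(e^{t\Delta_{S_k}})-\Tr(e^{2t\Delta_{(0,l_k/\sqrt 2)}})\bigr].
\]
Every constituent trace thus reduces to 1D interval traces, for which Lemma~\ref{lem: heat kernel 1D}, integrated in $x$, gives the closed form $\Tr(e^{t\Delta_{(0,L)}})=(4\pi t)^{-1/2}(L\theta(t/L^2)-\sqrt{\pi t})$ with $\theta(s):=\sum_{n\in\mathbb Z}e^{-n^2/s}$.

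The workhorse is Jacobi's functional equation, which gives two uniform inequalities: $\theta(s)\geq 1$ and $L\theta(t/L^2)\geq \sqrt{\pi t}$ for all $t,L>0$. Substituting the closed formulas into the bracketed inequality, expanding, and using the identities $|\Omega_M|=9+\tfrac14\sum_{k<M}l_k^2$ and $\Haus^1(\partial\Omega_M)=12+(\sqrt 2-1)\sum_{k<M}l_k$, the lower bound for $4\pi t\Tr(e^{t\Delta_{\Omega_M}})$ matches $|\Omega_M|-\tfrac{\sqrt{\pi t}}{2}\Haus^1(\partial\Omega_M)$ up to: (i) a spurious defect $-\sqrt{\pi t}\sum_{k<M}l_k$ because each tooth base of length $l_k$ is counted twice as Dirichlet boundary but is not part of $\partial\Omega_M$; and (ii) positive $t$-contributions $\pi t$ from $R$ and $\tfrac{3\pi t}{2}$ from each $T_k$. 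Applying the elementary inequality $2\sqrt{\pi t}\,l_k\leq t+\pi l_k^2$ term-by-term, summing, and invoking $\sum_{k<M}l_k^2=4(|\Omega_M|-9)$ absorbs the defect into a linear-in-$t$ remainder; balancing against the positive $t$-contributions produces the claimed constant $M+4|\Omega_M|-1$.

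The main obstacle is the $\sqrt t\to t$ conversion. Dirichlet bracketing inherently produces a defect of size $\sqrt t$, which for small $t$ is worse than the target $Ct$ bound, so the AM--GM step must combine sharply with the positive $t$-contributions from each constituent for the remainder to collapse to a linear-in-$t$ quantity uniformly on $(0,\infty)$. A parallel technical point is uniformity in $t$ of the $\theta$-corrections, which are exponentially small for $t\ll L^2$ but grow like $\sqrt{\pi t}/L$ for $t\gg L^2$; the Jacobi bound $L\theta(t/L^2)\geq \sqrt{\pi t}$ is precisely the inequality that makes the two-term expansion of each interval, square, and triangle trace a valid lower bound throughout, rather than only asymptotically.
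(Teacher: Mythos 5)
Your decomposition $\Omega_M = R \sqcup T_1 \sqcup \cdots \sqcup T_{M-1}$ by Dirichlet bracketing is a different route from the paper's, and unfortunately it cannot reach the stated conclusion. Cutting along the tooth bases introduces new Dirichlet boundary of total length $\sum_{k<M} l_k$, and this boundary is counted \emph{twice} in the two-term expansions of the constituents (once for $R$, once for $T_k$) but not at all in $\Haus^1(\partial\Omega_M)$. Concretely, $\Haus^1(\partial R) + \sum_k \Haus^1(\partial T_k) = \Haus^1(\partial\Omega_M) + 2\sum_{k<M} l_k$, so even if each constituent trace were known \emph{exactly} to two terms plus $O(t)$, the bracketing inequality would only give
$$
4\pi t\,\Tr(e^{t\Delta_{\Omega_M}}) \;\geq\; |\Omega_M| - \tfrac{\sqrt{\pi t}}{2}\Haus^1(\partial\Omega_M) - \sqrt{\pi t}\sum_{k<M}l_k + O(t)\,,
$$
i.e.\ $E_M(t) \geq -\sqrt{\pi t}\sum_{k<M}l_k + O(t)$. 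The term $-\sqrt{\pi t}\sum l_k$ is of order $\sqrt{t}$, which is strictly larger than any multiple of $t$ as $t\to 0^\limplus$; this defect is not an artifact of imprecise bookkeeping but reflects the genuine eigenvalue increase caused by the cut, so no amount of balancing can remove it. Your AM--GM step does not rescue this: $\sqrt{\pi t}\,l_k \leq \tfrac12(t + \pi l_k^2)$ replaces a quantity that vanishes like $\sqrt t$ by a bound containing the $t$-independent constant $\tfrac{\pi}{2}l_k^2$, so after summing you obtain a constant-plus-linear bound, not a linear-in-$t$ remainder, and the "positive $t$-contributions" $\pi t$ and $\tfrac{3\pi t}{2}$ are of strictly smaller order than $\sqrt{\pi t}\sum l_k$ as $t\to 0^\limplus$ and cannot close the gap.

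The paper's proof avoids creating any new boundary. Instead of cutting, it uses the pointwise heat-kernel monotonicity $H_\Omega(x,t) \leq H_{\Omega'}(x,t)$ for $\Omega\subset\Omega'$ together with a different partition of $\Omega_M$: the region $L_k\cup M_k$ (the tooth together with its reflection through the tooth base into $Q_0$) is compared against the heat kernel of a \emph{large} square $Q_k\subset\Omega_M$ of side $\sqrt 2$, one corner of which coincides with the apex of the $k$-th tooth. Two sides of $Q_k$ lie along the slant sides of $M_k$ (which genuinely belong to $\partial\Omega_M$), while the tooth base is an interior segment of $Q_k$ and therefore creates no spurious boundary term. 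Because $Q_k$ has side length $\sqrt 2$ independent of $l_k$, the contributions from the far sides of $Q_k$ are $O(e^{-1/(2t)}) = O(t)$ rather than $O(\sqrt t)$. The residual $O(t)$ error in the paper's argument comes from the elementary Gaussian-tail estimate \eqref{eq: erf estimates} applied once per tooth (giving the term $(M-1)t$) and from the factor $(1-4e^{-c/t})$ in Corollary~\ref{cor:Heat kernel square} (giving the $4|\Omega_M|t$). Your interval formula via Lemma~\ref{lem: heat kernel 1D}, the triangle-to-square unfolding, and the Jacobi bounds $\theta(s)\geq\max(1,\sqrt{\pi s})$ are all individually correct and could serve as components of a different proof; the flaw is specifically the choice to cut the domain.
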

\begin{remark}
    By comparing the bound in Proposition~\ref{prop:errorbound} with the third term in~\eqref{eq:heat trace corner term} the linear dependence on $M$ and $t$ in Proposition~\ref{prop:errorbound} appears to be order-sharp. Indeed, for each fixed $M$ the three-term expansion~\eqref{eq:heat trace corner term} states that
    \begin{equation*}
      E_M(t)= 
      \frac{t}{3}\sum_{k<M}\biggl[\frac{\pi^2-(\pi/2)^2}{2(\pi/2)}+2\frac{\pi^2-(5\pi/4)^2}{2(5\pi/4)} \biggr]+ o(t)
      =
  \frac{\pi}{10}(M-1)t+ o(t)\, .
    \end{equation*}
\end{remark}

Let $H_\Omega(x, t)=e^{t\Delta_\Omega}(x, x)$ denote the heat kernel of the Dirichlet Laplacian on $\Omega$ evaluated on the diagonal. For our proof of Proposition~\ref{prop:errorbound} we need a pointwise lower bound for $H_{(0, L)^2}(x, t)$ which has the asymptotically correct behaviour close to the boundary. 
\begin{lemma}\label{lem:1D kernel bound}
  For the heat kernel of Dirichlet Laplacian on the interval $(0, L)$, 
  \begin{equation*}
    H_{(0, L)}(x, t) \geq (4\pi t)^{-1/2}\bigl(1-e^{-x^2/t}-e^{-(L-x)^2/t}\bigr)\, .
  \end{equation*}
\end{lemma}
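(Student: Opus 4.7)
The plan is to work directly from the explicit representation of the heat kernel given in Lemma~\ref{lem: heat kernel 1D}. Write the sum as
\[
\sum_{m\in\Z} \bigl(e^{-m^2L^2/t} - e^{-(mL+x)^2/t}\bigr) = T_0 + T_{+} + T_{-},
\]
corresponding to $m=0$, $m\geq 1$, and $m\leq -1$. The $m=0$ term gives exactly the main contribution $T_0 = 1 - e^{-x^2/t}$. So it remains to show $T_{+} + T_{-} \geq -e^{-(L-x)^2/t}$.

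For $T_{+}$: for $m\geq 1$ and $0\leq x\leq L$ one has $(mL+x)^2 \geq m^2L^2$, hence each summand is non-negative, so $T_{+}\geq 0$. The more delicate piece is $T_{-}$. Reindexing $n=-m\geq 1$, one has $(mL+x)^2 = (nL-x)^2$. I will use two elementary inequalities valid for $n\geq 1$ and $0\leq x\leq L$, namely $(nL-x)^2 \leq (nL)^2$ (so each summand is non-positive) and, more usefully for $n\geq 2$, $(nL-x)^2 \geq ((n-1)L)^2$ (which follows from factoring $(L-x)((2n-1)L-x)\geq 0$).

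Next, I isolate the $n=1$ term, which equals $e^{-L^2/t} - e^{-(L-x)^2/t}$, accounting for essentially all of the loss. For $n\geq 2$, the second inequality above yields $e^{-(nL-x)^2/t} \leq e^{-((n-1)L)^2/t}$, hence
\[
\sum_{n\geq 2}\bigl(e^{-n^2L^2/t} - e^{-(nL-x)^2/t}\bigr) \geq \sum_{n\geq 2}\bigl(e^{-n^2L^2/t} - e^{-(n-1)^2L^2/t}\bigr) = -e^{-L^2/t},
\]
by telescoping. Adding this to the $n=1$ contribution gives $T_{-} \geq -e^{-(L-x)^2/t}$, and combining with $T_{+}\geq 0$ and $T_0 = 1-e^{-x^2/t}$ and dividing by $\sqrt{4\pi t}$ yields the claimed lower bound.

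The only place that requires any care is the choice of comparison in the telescoping step: one must use the bound $(nL-x)^2 \geq ((n-1)L)^2$ only for $n\geq 2$ (it is sharp at $n=1, x=L$ and would waste the key boundary term if used there), so the $n=1$ term is pulled out explicitly. Everything else is straightforward algebra, and the proof reduces to these bookkeeping observations on the explicit series.
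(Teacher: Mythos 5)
Your proof is correct. Let me compare it to the paper's.

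Both you and the paper start from the Poisson-summation representation and both isolate the $m=0$ term (giving $1-e^{-x^2/t}$) and the $m=-1$ term (giving $e^{-L^2/t}-e^{-(L-x)^2/t}$), so the key boundary corrections appear the same way. The difference is in how the remaining tail is shown to be harmless. The paper exploits the reflection symmetry $x\leftrightarrow L-x$: it rewrites $\sum_{n\geq 2}e^{-(nL-x)^2/t}$ as $\sum_{m\geq 1}e^{-(mL+(L-x))^2/t}$ and then observes that for every $m\geq 1$ both $e^{-m^2L^2/t}-e^{-(mL+x)^2/t}$ and $e^{-m^2L^2/t}-e^{-(mL+(L-x))^2/t}$ are individually nonnegative, so the whole tail can be discarded term by term. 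You instead bound $T_+\geq 0$ term by term, and for the $n\geq 2$ part of $T_-$ you use the comparison $(nL-x)^2\geq((n-1)L)^2$ together with a telescoping sum to show this tail is $\geq -e^{-L^2/t}$, which then cancels against the $+e^{-L^2/t}$ coming from the $n=1$ term. Your telescoping approach is perhaps slightly less symmetric in appearance but equally elementary, and it requires slightly more bookkeeping (including noting why the $n=1$ term must be excluded from the telescope, which you correctly flag). The paper's pairing argument is a bit cleaner because it avoids the near-cancellation of $e^{-L^2/t}$ entirely; yours is a valid alternative that works just as well.
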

\begin{proof}[Proof of Lemma~\ref{lem:1D kernel bound}]
  By Lemma~\ref{lem: heat kernel 1D},
  \begin{align*}
    H_{(0, L)}(x, t)
    &= 
    \frac{1}{2\sqrt{\pi t}}\sum_{m\in \Z}\Bigl(e^{-\frac{m^2L^2}{t}}-e^{-\frac{(mL+x)^2}{t}}\Bigr)\\
    &=
    \frac{1}{2\sqrt{\pi t}}\biggl[1-e^{-\frac{x^2}{t}}-e^{-\frac{(L-x)^2}{t}}+\sum_{m\geq 1}\Bigl(2e^{-\frac{m^2L^2}{t}}-e^{-\frac{(mL+x)^2}{t}}-e^{-\frac{(mL+(L-x))^2}{t}}\Bigr)\biggr]\\
    &\geq
    \frac{1}{2\sqrt{\pi t}}\Bigl[1-e^{-\frac{x^2}{t}}-e^{-\frac{(L-x)^2}{t}}\Bigr]\,,
  \end{align*}
  where in the final step we used $e^{-\frac{m^2L^2}{t}}-e^{-\frac{(mL+x)^2}{t}} \geq 0$ and $e^{-\frac{m^2L^2}{t}}-e^{-\frac{(m L+(L-x))^2}{t}}\geq 0$, for $m\geq 1$ and $x \in (0, L)$.
\end{proof}

\begin{corollary}\label{cor:Heat kernel square}
For the heat kernel of Dirichlet Laplacian on the square $(0, L)^2$ we have
  \begin{align*}
    H_{(0, L)^2}(x, t) &\geq (4\pi t)^{-1}
    \bigl(1- e^{-d(x_1)^2/t}-e^{-d(x_2)^2/t}-4e^{-L^2/(4t)}\bigr)\, , 
  \end{align*}
where $x=(x_1, x_2)\in (0, L)^2$ and $d(x_j)= \min\{x_j, L-x_j\}$.
\end{corollary}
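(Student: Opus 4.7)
\medskip

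\noindent\textbf{Plan of proof.} My starting point is the product structure of the Dirichlet heat kernel on the square: since $(0,L)^2$ is a product domain,
\begin{equation*}
  H_{(0,L)^2}(x,t) = H_{(0,L)}(x_1,t)\,H_{(0,L)}(x_2,t).
\end{equation*}
I would then apply Lemma~\ref{lem:1D kernel bound} to each factor, obtaining
\begin{equation*}
  H_{(0,L)}(x_j,t) \geq (4\pi t)^{-1/2}\bigl(1-a_j\bigr), \qquad a_j:=e^{-x_j^2/t}+e^{-(L-x_j)^2/t}.
\end{equation*}
Observing that one of the two summands in $a_j$ equals $e^{-d(x_j)^2/t}$ while the other has exponent $\geq L^2/(4t)$, I get
\begin{equation*}
  a_j \leq e^{-d(x_j)^2/t} + e^{-L^2/(4t)}.
\end{equation*}

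The next step is to multiply the two one-dimensional lower bounds. The subtle point is that $(1-a_j)$ might be negative, so one cannot simply multiply. I would split into cases. \emph{Case 1:} both $a_1,a_2\leq 1$. Then both factors on the right are non-negative and
\begin{equation*}
  H_{(0,L)^2}(x,t)\geq (4\pi t)^{-1}(1-a_1)(1-a_2)\geq (4\pi t)^{-1}(1-a_1-a_2),
\end{equation*}
since $a_1a_2\geq 0$. Plugging in the bound for $a_j$ yields the claimed inequality (in fact with a $2\,e^{-L^2/(4t)}$ term, which is stronger than the stated $4\,e^{-L^2/(4t)}$).

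\emph{Case 2:} at least one of $a_j>1$, say $a_2>1$. Here the bound obtained by multiplication is not directly useful, but the heat kernel always satisfies $H_{(0,L)^2}(x,t)\geq 0$, and I would verify that in this regime the right-hand side of the claimed inequality is already non-positive. Indeed, from $a_2>1$ and $a_2\leq e^{-d(x_2)^2/t}+e^{-L^2/(4t)}$ I obtain $e^{-d(x_2)^2/t}\geq 1-e^{-L^2/(4t)}$, hence
\begin{equation*}
  1-e^{-d(x_1)^2/t}-e^{-d(x_2)^2/t}-4e^{-L^2/(4t)}\leq -3\,e^{-L^2/(4t)}-e^{-d(x_1)^2/t}\leq 0,
\end{equation*}
so the bound holds trivially. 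Combining the two cases completes the proof.

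I do not anticipate any serious obstacle here: the main content is the one-dimensional estimate of Lemma~\ref{lem:1D kernel bound}, and the only mild subtlety is the bookkeeping needed to accommodate parameter regimes in which the one-dimensional bound is vacuous. The factor $4$ in the exponential term in the statement provides a comfortable margin for the trivial case and could in fact be replaced by any constant $\geq 1$.
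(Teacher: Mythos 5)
Your proof is correct and rests on the same ingredients as the paper's: the product structure $H_{(0,L)^2}(x,t)=H_{(0,L)}(x_1,t)H_{(0,L)}(x_2,t)$, the one-dimensional bound from Lemma~\ref{lem:1D kernel bound}, and the observation that whenever one of the two exponential terms in $a_j$ is $e^{-d(x_j)^2/t}$, the other has exponent at least $L^2/(4t)$. The only divergence is cosmetic: the paper first reduces by symmetry to $0<x_1,x_2\le L/2$, replaces each factor $(1-a_j)$ by its positive part, expands $A_\limplus B_\limplus=(A+A_\limminus)(B+B_\limminus)$, and bounds the negative parts; you instead skip the symmetry reduction and split into the two cases $a_1,a_2\le 1$ versus $\max_j a_j>1$, using the nonnegativity $H_{(0,L)^2}\ge 0$ to dispatch the latter. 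Both routes address the same obstacle — that the one-dimensional lower bound may be negative — and yield the same estimate. One small correction to your closing remark: your Case~1 produces a $2e^{-L^2/(4t)}$ term and your Case~2 needs the coefficient to be at least $1$, so this argument supports replacing $4$ by any constant $\ge 2$, not $\ge 1$.
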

\begin{proof}[Proof of Corollary~\ref{cor:Heat kernel square}]
  By symmetry it suffices to consider the region where $0<x_1, x_2 \leq L/2$. By Lemma~\ref{lem:1D kernel bound} and the product structure of the heat kernel in $(0, L)^2$ we have
  \begin{equation}\label{eq: product bound}
   (4\pi t) H_{(0, L)^2}(x, t) 
   \geq \bigl(1- e^{-x_1^2/t}-e^{-(L-x_1)^2/t}\bigr)_\limplus\bigl(1-e^{-x_2^2/t}-e^{-(L-x_2)^2/t}\bigr)_\limplus\, .
   \end{equation}
   Writing $A_\limplus B_\limplus = (A+A_\limminus)(B+B_\limminus)$ we find
  \begin{align*}
  \bigl(1- e^{-x_1^2/t}-e^{-(L-x_1)^2/t}\bigl)_\limplus&\bigl(1-e^{-x_2^2/t}-e^{-(L-x_2)^2/t}\bigr)_\limplus\\
   &=
   1- e^{-x_1^2/t}-e^{-x_2^2/t}-e^{-(L-x_1)^2/t}-e^{-(L-x_2)^2/t}\\
   &\quad  + e^{-(x_1^2+x_2^2)/t}+ e^{-(x_1^2+(L-x_2)^2)/t)}+e^{-((L-x_1)^2+x_2^2)/t}\\
   &\quad  + e^{-((L-x_1)^2+(L-x^2)^2)/t}\\
   &\quad - \bigl(1- e^{-x_1^2/t}-e^{-(L-x_1)^2/t}\bigr)\bigl(1-e^{-x_2^2/t}-e^{-(L-x_2)^2/t}\bigr)_\limminus\\
   &\quad - \bigl(1-e^{-x_2^2/t}-e^{-(L-x_2)^2/t}\bigr)\bigl(1- e^{-x_1^2/t}-e^{-(L-x_1)^2/t}\bigr)_\limminus\\
   &\quad+ \bigl(1- e^{-x_1^2/t}-e^{-(L-x_1)^2/t}\bigr)_\limminus\bigl(1-e^{-x_2^2/t}-e^{-(L-x_2)^2/t}\bigr)_\limminus\, .
   \end{align*}
   By discarding positive terms, using $L-x_j\geq L/2$ and, since $1-e^{x_j^2/t}\geq 0$, 
   \begin{equation*}
    \bigl(1- e^{-x_j^2/t}-e^{-(L-x_j)^2/t}\bigr)_\limminus \leq e^{-(L-x_j)^2/t}\, , 
  \end{equation*}
  we find 
   \begin{align*}
   \bigl(1- e^{-x_1^2/t}-e^{-(L-x_1)^2/t}\bigr)_\limplus&\bigl(1-e^{-x_2^2/t}-e^{-(L-x_2)^2/t}\bigr)_\limplus\\
   &\geq
   1- e^{-x_1^2/t}-e^{-x_2^2/t}-2e^{-L^2/(4t)}\\
   &\quad - \bigl(1-e^{-x_2^2/t}-e^{-(L-x_2)^2/t}\bigr)_\limminus - \bigl(1- e^{-x_1^2/t}-e^{-(L-x_1)^2/t}\bigr)_\limminus\\
   &\geq
   1- e^{-x_1^2/t}-e^{-x_2^2/t}-4e^{-L^2/(4t)}\, .
  \end{align*}
  Which when combined with~\eqref{eq: product bound} yields the claimed bound and completes the proof of the corollary.
\end{proof}

We are now ready to prove Proposition~\ref{prop:errorbound}.

\begin{proof}[Proof of Proposition~\ref{prop:errorbound}]
Our proof is based on the fact that if $x\in \Omega\subset \Omega'$ then $H_\Omega(x, t)\leq H_{\Omega'}(x, t) $ for any $t>0$~\cite{MR990239}. We will bound $\Tr(e^{t\Delta_{\Omega_M}})$ from below by bounding the heat kernel pointwise from below in terms of the heat kernel of appropriate squares contained in $\Omega_M$.

At several points in the proof we shall use the fact that, for $\delta, t >0$, 
\begin{equation}\label{eq: erf estimates}
  0\leq \frac{\sqrt{\pi t}}{2} - \int_0^\delta e^{-s^2/t}\, ds = \int_\delta^\infty e^{-s^2/t}\, dt = \frac{\sqrt{t}}{2}\int_{\delta^2/t}^\infty \frac{e^{-y}}{\sqrt{y}}\, dy \leq \frac{t}{2\delta}e^{-\delta^2/t}\, .
\end{equation}

We introduce some notation for different regions in $\Omega$. Let $Q_k$ be the square of side-length $\sqrt{2}$ placed so that one corner matches the $k$-th tooth. By construction, each tooth is at least a distance $1$ away from the vertical sides of $Q_0=(0, 3)^2$. Therefore, $Q_k \subset \Omega_M$ for each $k<M$. Let $M_k$ be the triangular region corresponding to the $k$-th tooth and let $L_k$ denote the same region but mirrored across the boundary of $Q_0$. The set $\overline{L_k\cup M_k}$ is a square with sidelength $l_k/\sqrt{2}$. Since $\frac{l_k}{\sqrt{2}} < \sum_{j\geq 1}\frac{l_j}{\sqrt{2}} \leq \frac{1}{\sqrt{2}} = \frac{\sqrt{2}}{2}$ the set $L_k\cup M_k$ is contained in one of the quarters of $Q_k$ obtained by cutting parallel to its sides. See Figure~\ref{fig:MkLk} for an illustration of what was described above.

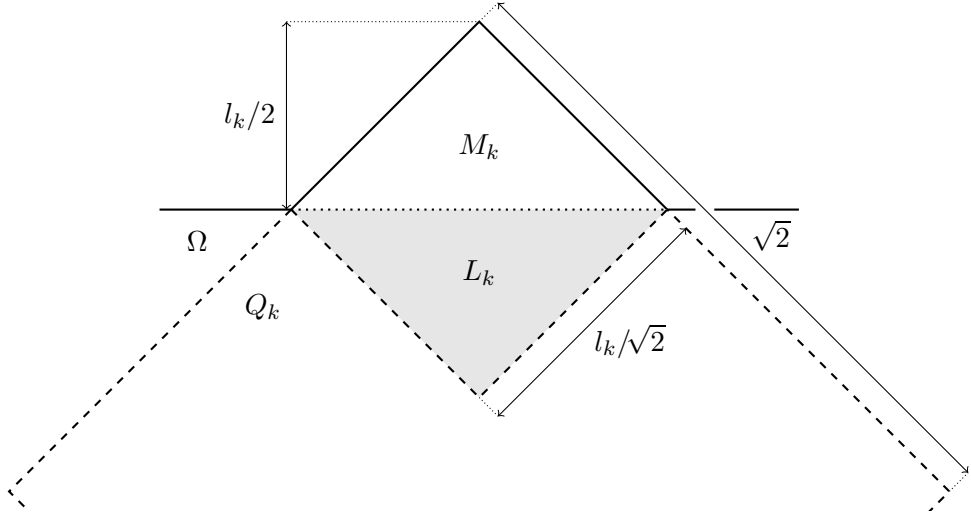
\begin{figure}[h]
  \centering
  \begin{tikzpicture}[scale=2.5]

\clip (-2.7,-1.6) rectangle (2.7,1.2);

	\draw[thick] (-1.7, 0)--(-1,0)--(0,1)--(1,0)--(1.15,0);
	\draw[thick] (1.7,0)--(1.25,0);
	\fill[gray!20] (-1,0)--(1,0)--(0,-1);
	\draw[thick,dashed] (-1,0)--(0,-1)--(1,0);
	\draw[thick,dotted] (-1,0)--(1,0);

	\draw[<->] (-1.025,0)--(-1.025,1);
	\draw[densely dotted] (-1.025,1)--(0,1);
	\draw[dashed, thick] (-1,0)--(-2.5,-1.5)--(0,-4)--(2.5,-1.5)--(1,0);

	\node at (-1.5, -1/6) {$\Omega$};
	\node at (0,1/3) {$M_k$};
	\node at (0,-1/3) {$L_k$};
	\node at (-1.22, 1/2) {$l_k/2$};
	\node at (1.55,-0.14) {$\sqrt{2}$};

	\draw[densely dotted] (0,1)--(0.1,1.1);
	\draw[densely dotted] (2.5,-1.5)--(2.6,-1.4);
	\draw[<->] (2.6,-1.4)--(0.1,1.1);

	\node at ({-1.5+0.35}, {-1/6-0.35}) {$Q_k$};

	\draw[densely dotted] (0,-1)--(0.1,-1.1);
	\draw[<->] (0.1,-1.1)--(1.1,-0.1);
	\node at ({1/2+0.3,-1/2-0.2}) {$l_k/{\!\sqrt{2}}$};

\end{tikzpicture}
  \caption{Depiction of the regions $L_k$, $M_k$, and the corresponding square $Q_k$ contained in $\Omega$.}
  \label{fig:MkLk}
\end{figure}

Writing $\Tr(e^{t\Delta_{\Omega_M}})$ as the integral of $H_{\Omega_M}$ we can split the integral into pieces to be treated separately, 
\begin{align*}
   \Tr(e^{t\Delta_{\Omega_M}}) 
   &= 
   \int_{\Omega_M}H_{\Omega_M}(x, t)\, dx\\
   &= 
   \int_{Q_0\setminus \cup_k L_k} H_{\Omega_M}(x, t)\, dx  
   + \sum_{k< M} \int_{L_k\cup M_k}H_{\Omega_M}(x, t)\, dx\, .
\end{align*}
For the first integral we use $H_{\Omega_M}(x, t)\geq H_{Q_0}(x, t)$ for all $x\in Q_0$ and $t> 0$. For the integral over $L_k\cup M_k$ we use the fact that $H_{\Omega_M}(x, t)$, for $x\in L_k\cup M_k$ and $t>0$, is bounded from below by $H_{Q_k}(x, t)$. What one finds is
\begin{equation}\label{eq: Pointwise Lower bound OmegaM}
 \Tr(e^{t\Delta_{\Omega_M}}) 
   \geq \int_{Q_0\setminus \cup_{k<M} L_k} H_{Q_0}(x, t)\, dx  + \sum_{k< M} \int_{L_k\cup M_k}H_{Q_k}(x, t)\, dx\, .
\end{equation}

To bound the integral of $H_{Q_0}$ we apply Corollary~\ref{cor:Heat kernel square} which yields, with $d(x)=\min\{x, 3-x\}$,
\begin{align*}
\int_{Q_0\setminus \cup_{k<M} L_k}H_{Q_0}(x, t)\, dx
& \geq
   (4\pi t)^{-1}\int_{Q_0\setminus \cup_{k<M} L_k} \bigl(1-e^{-d(x_1)^2/t}-e^{-d(x_2)^2/t}-4e^{-9/(4t)}\bigr)\, dx  \\
   & =
   (4\pi t)^{-1}|Q_0\setminus \cup_{k<M}L_k|(1-4e^{-9/(4t)})  \\
   &\quad-(4\pi t)^{-1}\int_{Q_0} \bigl(e^{-d(x_1)^2/t}+e^{-d(x_2)^2/t}\bigr)\, dx \\
   &\quad + (4\pi t)^{-1}\sum_{k<M}\int_{L_k} \bigl(e^{-d(x_1)^2/t}+e^{-d(x_2)^2/t}\bigr)\, dx\, .
 \end{align*} 
By symmetry and~\eqref{eq: erf estimates},
\begin{equation*}
  \int_{Q_0} \bigl(e^{-d(x_1)^2/t}+e^{-d(x_2)^2/t}\bigr)\, dx
  =  12\int_0^{3/2}e^{-x_1^2/t}\, dx_1
  \leq 6\sqrt{\pi t} = \frac{\sqrt{\pi t}}{2}\Haus^{1}(\partial Q_0)\, , 
\end{equation*}
and similarly, for any $k\geq 1$, 
\begin{align*}
  \int_{L_k} \bigl(e^{-d(x_1)^2/t}+e^{-d(x_2)^2/t}\bigr)\, dx 
  &\geq
  \int_{L_k}e^{-d(x_2)^2/t}\, dx\\
  &=
  \int_0^{l_k/2} (l_k-2s)e^{-s^2/t}\, ds\\
  &= l_k \int_0^{l_k/2}e^{-s^2/t}\, ds - t(1-e^{-l_k^2/(4t)})\\
  &\geq  \frac{\sqrt{\pi t}}{2}l_k -t \\
  &= \frac{\sqrt{\pi t}}{2}\Haus^{1}(\partial L_k \cap \partial Q_0)-t\, .
\end{align*}
Thus, we have shown that 
\begin{equation}\label{eq: bound Q minus Lk}
\begin{aligned}
  \int_{Q_0\setminus \cup_{k<M}L_k}H_{Q_0}(x, t)\,dx 
  &\geq 
  (4\pi t)^{-1}\biggl[|Q_0\setminus \cup_{k<M}L_k|(1-4e^{-9/(4t)})\\
  &\quad - \frac{\sqrt{\pi t}}{2}\Bigl(\Haus^1(\partial Q_0) - \sum_{k<M}\Haus^1(\partial L_k \cap \partial Q_0)\Bigr)- (M-1)t\biggr]\,.
\end{aligned}
\end{equation}

 What remains of our proof is to estimate the second integral in~\eqref{eq: Pointwise Lower bound OmegaM}.
 By Corollary~\ref{cor:Heat kernel square}, and with $d(x_j)$ interpreted appropriately,
\begin{align*}
    \int_{L_k\cup M_k}&H_{Q_k}(x, t)\, dx \\
    &\geq
    (4\pi t)^{-1}\int_{L_k\cup M_k} \bigl(1-e^{-d(x_1)^2/t}-e^{-d(x_2)^2/t}-4e^{-1/(2t)}\bigr)\, dx \\
    &=
    (4\pi t)^{-1}\biggl[|L_k\cup M_k|(1-4e^{-1/(2t)}) -\int_{L_k\cup M_k} \bigl(e^{-d(x_1)^2/t}+e^{-d(x_2)^2/t}\bigr)\, dx\biggr]\, .
\end{align*} 
By symmetry, ~\eqref{eq: erf estimates}, and the fact that $L_k\cup M_k$ is contained in a quarter of the larger square $Q_k$ we have that
\begin{align*}
  \int_{L_k\cup M_k} \bigl(e^{-d(x_1)^2/t}+e^{-d(x_2)^2/t}\bigr)\, dx
  &=
  2 \int_{L_k\cup M_k} e^{-d(x_1)^2/t}\, dx\\
  & = \sqrt{2}l_k \int_{0}^{l_k/\sqrt{2}} e^{-s^2/t}\, ds\\
  &\leq
  \frac{\sqrt{\pi t}}{2}\sqrt{2}l_k\\
  &= \frac{\sqrt{\pi t}}{2}\Haus^{1}(\partial \Omega_M \cap \partial M_k) \, .
\end{align*}
Thus, we have shown that
\begin{equation}\label{eq: bound Lk cup Mk}
  \int_{L_k \cup M_k}H_{Q_k}(x, t)\,dx 
  \geq 
  (4\pi t)^{-1}
  \biggl[|L_k\cup M_k|(1-4e^{-1/(2t)})- \frac{\sqrt{\pi t}}{2}\Haus^1(\partial \Omega_M \cap \partial M_k)\biggr]\,.
\end{equation}

Combining~\eqref{eq: bound Q minus Lk} and~\eqref{eq: bound Lk cup Mk} we have arrived at the bound
\begin{equation*}
  \begin{aligned}
    \Tr(e^{t\Delta_{\Omega_M}}) 
    &\geq (4\pi t)^{-1}\biggl[
      |\Omega_M| - \frac{\sqrt{\pi t}}{2}\Haus^{1}(\partial\Omega_M)
         - (M-1)t- 4|\Omega_M|e^{-1/(2t)}
    \biggr]\, .
  \end{aligned}
\end{equation*}
Rearranging, and since $e^{-1/(2t)}\leq t$, for all $t\geq 0$, we have proved that
\begin{equation*}
   E_M(t) \geq -(M+4|\Omega_M|-1)t\, , 
 \end{equation*} 
which completes the proof of Proposition~\ref{prop:errorbound}.
\end{proof}

\section{Auxiliary results}
\label{sec: aux lemmas}

In this section we prove a number of technical results which will be needed in the proof of Theorem~\ref{thm:MainHeat}. Specifically we prove that one can find a function whose $L^1$ tail tends to zero slower than a given non-negative function $G$, while satisfying some additional properties.

\begin{proposition}\label{prop: Aux1 integrals}
  Let $G\colon (0, 1)\to \R$ be a non-negative function with $\lim_{\tau \to 0^\limplus}G(\tau )=0$. There exists a strictly decreasing non-negative and smooth function $h\in L^1((0, 1))$ such that
  \begin{equation*}
    \lim_{\tau \to 0^\limplus} \frac{\int_0^\tau  h(x)\, dx}{G(\tau )} =\infty\, , \quad \mbox{and} \quad \lim_{\tau\to 0^\limplus}\frac{\tau h(\tau )+\frac{1}{\tau }\int_0^\tau  x h(x)\, dx}{\int_0^\tau  h(x)\, dx} =0\, .
  \end{equation*}
\end{proposition}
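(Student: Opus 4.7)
Replace $G$ by its non-decreasing majorant $\tilde G(\tau) := \sup_{0<s\leq\tau}G(s)$, which satisfies $\tilde G\geq G$ and $\tilde G(\tau)\to 0$ as $\tau\to 0^\limplus$; solving the proposition for $\tilde G$ solves it for $G$, so we may assume $G$ is non-decreasing. Pass to the logarithmic variable $v = -\log\tau$, write $g(v) := G(e^{-v})$ (non-increasing with $g(v)\to 0$ as $v\to\infty$), and seek $h$ in the form
\begin{equation*}
  h(\tau) = -\Phi'(-\log\tau)/\tau
\end{equation*}
for a smooth, strictly decreasing function $\Phi\colon(0,\infty)\to(0,\infty)$ with $\Phi(v)\to 0$ as $v\to\infty$. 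The substitution $x=e^{-u}$ gives $H(\tau) := \int_0^\tau h(x)\,dx = \Phi(-\log\tau)$, so the first limit in the proposition becomes the requirement $g(v)/\Phi(v)\to 0$ as $v\to\infty$.

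For the second limit, observe that $\tau h(\tau)/H(\tau) = |\Phi'(v)|/\Phi(v)$, which is $o(1)$ exactly when $\Phi$ satisfies the slow-variation bound $|\Phi'|/\Phi\to 0$. Integration by parts combined with the change of variable $x=\tau e^{-s}$ turns the remaining term into
\begin{equation*}
  \tfrac{1}{\tau}\int_0^\tau xh(x)\,dx = \Phi(v) - \int_0^\infty e^{-s}\Phi(v+s)\,ds.
\end{equation*}
Slow variation implies $\Phi(v+s)/\Phi(v)\to 1$ for each fixed $s$ (since $|\log[\Phi(v+s)/\Phi(v)]| = \int_v^{v+s}|\Phi'|/\Phi\,du\to 0$), and since the ratio is bounded by $1$, dominated convergence gives $\int_0^\infty e^{-s}\Phi(v+s)\,ds = \Phi(v)(1+o(1))$. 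Hence the right-hand side above is $o(\Phi(v)) = o(H(\tau))$, and the second limit follows. The entire problem therefore reduces to exhibiting a suitable $\Phi$.

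To construct $\Phi$, choose $v_n\nearrow\infty$ with $v_{n+1}-v_n\geq 1$ and $g(v_n)\leq 1/(n\log(n+2))$, which is possible since $g\to 0$. Define $\Phi_0$ piecewise linear with $\Phi_0(v_n) = 1/\log(n+2)$ on $[v_1,\infty)$ and let $\Phi$ be a smooth strictly decreasing mollification of $\Phi_0$ at a length-scale comparable to $(v_{n+1}-v_n)/2$ near each node. A direct computation shows that on $[v_n,v_{n+1}]$ the quantities $g/\Phi$, $|\Phi'|/\Phi$, and $|\Phi''|/|\Phi'|$ are all $O(1/n)$ as $n\to\infty$. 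Combined with $\Phi'<0$ and the identity
\begin{equation*}
  h'(\tau) = (\Phi''(v)+\Phi'(v))/\tau^2,
\end{equation*}
the bound $|\Phi''|<|\Phi'|$ forces $h'(\tau)<0$ for small $\tau$, so $h$ is strictly decreasing there. We then extend $h$ to all of $(0,1)$ as a smooth, positive, strictly decreasing function, and note that the integrability $\int_0^1 h = \Phi(0)<\infty$ is automatic.

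\textbf{Main obstacle.} The crux is tailoring $\Phi$ so as to satisfy simultaneously (i) $g=o(\Phi)$, (ii) $|\Phi'|/\Phi\to 0$, and (iii) $|\Phi''|<|\Phi'|$ near infinity, the last condition being required for strict monotonicity of $h$ (and not just a consequence of slow variation). The logarithmic levels $1/\log(n+2)$ are slow enough that (ii) and (iii) survive a mollification on the scale of the node spacing, while still permitting (i) through a suitable choice of node positions $v_n$. Controlling $\Phi''$ after mollification is the most delicate point, because $g$ may decay arbitrarily slowly and the regularization scale must be coordinated with the variable spacings $v_{n+1}-v_n$ dictated by $g$.
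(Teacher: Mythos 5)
Your route is genuinely different from the paper's, and the conceptual core is correct: passing to $v=-\log\tau$, writing $h(\tau)=-\Phi'(-\log\tau)/\tau$ so that $\int_0^\tau h=\Phi(v)$, and reducing the second limit to slow variation $|\Phi'|/\Phi\to 0$ via dominated convergence is clean and verifiable (I checked the change of variables and the integration by parts; they are right, as is the observation that $h'(\tau)=(\Phi''+\Phi')/\tau^2$, so monotonicity of $h$ is equivalent to $\Phi''<-\Phi'=|\Phi'|$). The paper instead regularizes $G$ to a smooth, increasing, \emph{concave} majorant (Lemma~\ref{lem:Regular Modulus of continuity}) and then takes the closed-form ansatz $h(\tau)=G'(\tau)U(G(\tau))$ with $U(y)=y^{-1}\bigl(\log(e^2\|G\|_\infty/y)\bigr)^{-2}$; the identity $\int_0^\tau h=\int_0^{G(\tau)}U$, concavity ($\tau G'(\tau)\le G(\tau)$), and the sign of $U+yU'$ do all the work, so no mollification or node-bookkeeping is needed. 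Both routes buy something: yours isolates the mechanism (slow variation of the primitive), while the paper's is shorter because a single explicit $U$ kills all three requirements simultaneously.

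The gap is in the construction of $\Phi$. The claim that on $[v_n,v_{n+1}]$ one has $|\Phi''|/|\Phi'|=O(1/n)$ after mollifying the piecewise-linear $\Phi_0$ at scale $\sim(v_{n+1}-v_n)/2$ is not justified as stated, and in fact it is false without further constraints on the spacings $\Delta_n:=v_{n+1}-v_n$. The slope on $[v_n,v_{n+1}]$ is $s_n=-a_n/\Delta_n$ with $a_n=1/\log(n+2)-1/\log(n+3)\sim 1/(n\log^2 n)$, and near the node $v_{n+1}$ the mollified second derivative is of order $|s_{n+1}-s_n|/\delta_n$ while $|\Phi'|$ can be as small as $\min(|s_n|,|s_{n+1}|)$. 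If $g$ decays slowly, you are forced to take some $\Delta_n$ very large; if the next gap $\Delta_{n+1}$ is then taken much smaller (which the constraint $g(v_n)\le 1/(n\log(n+2))$ allows), the slope ratio $|s_{n+1}|/|s_n|=a_{n+1}\Delta_n/(a_n\Delta_{n+1})$ can be arbitrarily large, and $|\Phi''|/|\Phi'|$ can exceed~$1$, destroying monotonicity of $h$. This is fixable --- e.g.\ additionally impose that the $\Delta_n$ are nondecreasing, satisfy $\Delta_{n+1}\le 2\Delta_n$, and tend to infinity, which is compatible with the constraint on $g$ since the nodes can always be pushed further out --- but those extra hypotheses on the node placement must be stated and verified, and the quantitative bound then comes out as $O(1/\Delta_n)$ rather than $O(1/n)$. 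As written, the proposal identifies this as ``the most delicate point'' but does not close it, so the proof is not complete.
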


As a consequence of Proposition~\ref{prop: Aux1 integrals} we can prove the following result in the setting of sequences which provides the final ingredient to complete the proof of Theorem~\ref{thm:MainHeat}.
\begin{corollary}\label{cor: Aux2 sequences}
  Let $G\colon (0, 1)\to \R$ be non-negative function with $\lim_{\tau\to 0^\limplus}G(\tau)=0$ and let $A>0$. There exists a positive non-increasing sequence $\{l_k\}_{k\geq 1}$ with $\sum_{k\geq 1}l_k=A$ and a decreasing function $M\colon (0, 1) \to \N$ such that
  \begin{equation*}
    \lim_{\tau\to 0^\limplus} \frac{\sum_{k\geq M(\tau)}l_k}{G(\tau)}=\infty\, , 
    \quad \mbox{and}\quad 
    \lim_{\tau\to 0^\limplus}\frac{\tau M(\tau) +\frac{1}{\tau}\sum_{k\geq M(\tau)} l_k^2}{\sum_{k\geq M(\tau)}l_k}=0\, .
  \end{equation*}
\end{corollary}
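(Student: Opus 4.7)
The plan is to discretize the function $h$ from Proposition~\ref{prop: Aux1 integrals} to produce the sequence $\{l_k\}$, and then to choose $M(\tau)$ by balancing the two conditions in the conclusion.

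First I would reduce to the case where $G$ is non-increasing, by replacing $G$ with $\tau\mapsto\sup_{s\in(0,\tau]}G(s)$; a sequence $\{l_k\}$ and function $M(\tau)$ witnessing the conclusion for this larger function works a fortiori for the original $G$. Applying Proposition~\ref{prop: Aux1 integrals} to the non-increasing $G$ and rescaling $h$ so that $\int_0^1 h\,dx=A$, I obtain a smooth, strictly decreasing, positive $h\in L^1(0,1)$ whose primitive $H(\tau):=\int_0^\tau h(s)\,ds$ satisfies $H(\tau)/G(\tau)\to\infty$ (call this (P1)) and $\eta(\tau):=\bigl(\tau H(\tau)\bigr)^{-1}\int_0^\tau sh(s)\,ds\to 0$ (call this (P3)) as $\tau\to0^\limplus$. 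I then define
\[
l_k:=H(1/k)-H(1/(k+1))=\int_{1/(k+1)}^{1/k}h(s)\,ds,\qquad k\ge1,
\]
so that $l_k>0$ and $\sum_{k\ge1}l_k=H(1)=A$. The non-increasingness of $\{l_k\}$ is equivalent to convexity of $x\mapsto H(1/x)$ on $[1,\infty)$, equivalently to $u\mapsto u^2h(u)$ being non-decreasing on $(0,1)$; I will assume this additional regularity, which is not explicit in Proposition~\ref{prop: Aux1 integrals} but can be arranged by a minor modification of its construction of $h$.

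Next I would construct $M(\tau)$. Define $\sigma(\tau)\in(0,1)$ as the unique solution of
\[
\tau^2=\sigma H(\sigma)\int_0^\sigma sh(s)\,ds,
\]
which is well-defined for small $\tau$ since the right-hand side is continuous, strictly increasing in $\sigma$, and tends to $0$ as $\sigma\to0^\limplus$. Set $M(\tau):=\lfloor 1/\sigma(\tau)\rfloor$, a non-increasing $\N$-valued function of $\tau$. Rewriting the defining equation as $\tau=\sigma H(\sigma)\sqrt{\eta(\sigma)}$ gives $\sigma H(\sigma)/\tau=1/\sqrt{\eta(\sigma)}\to\infty$ by (P3), and in particular $\sigma(\tau)\ge\tau$ for all sufficiently small $\tau$.

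For the first conclusion, $\sum_{k\ge M(\tau)}l_k=H(1/M(\tau))\ge H(\sigma(\tau))\ge H(\tau)$, and the last quantity dominates $G(\tau)$ by (P1). For the second conclusion, the piece $\tau M(\tau)/\sum_{k\ge M(\tau)}l_k$ is at most $\tau/(\sigma H(\sigma))=\sqrt{\eta(\sigma)}\to0$, and using the monotonicity of $\{l_k\}$ the $l_k^2$-piece is bounded by $l_M/\tau$. To control the latter, I would use the non-decreasing property of $u^2h(u)$ to obtain $l_M\le 16\sigma^2h(\sigma)$, combine with the elementary bound $\sigma^2h(\sigma)\le 4\int_{\sigma/2}^\sigma sh(s)\,ds\le 4\eta(\sigma)\sigma H(\sigma)$, and use the defining equation to conclude $l_M/\tau\le 64\sqrt{\eta(\sigma)}\to0$. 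The main obstacle is ensuring monotonicity of $\{l_k\}$; once that is in place, the verification of both asymptotic conditions is a routine consequence of (P1), (P3), and the choice of $\sigma(\tau)$.
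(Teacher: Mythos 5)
Your approach is genuinely different from the paper's. The paper discretizes the \emph{inverse} of $h$: it defines $f(y)=h^{-1}(y)\1_{y>h(1)}+\1_{y\leq h(1)}$, sets $l_k=c_0 f(k)$, and takes $M(\tau)$ to be the least integer $\geq h(\tau)$. The key payoff of that choice is that $f$ is decreasing by construction, so $\{l_k\}$ is automatically non-increasing, and moreover $f^{-1}(\tau)=h(\tau)$ turns the Riemann sums $\sum_{k\geq M}f(k)^j$ directly into the integrals $\int_{h(\tau)}^{\infty}f^j$, which after the substitution $x=f(y)$ become the very quantities $\int_0^\tau h$ and $\int_0^\tau xh$ controlled by Proposition~\ref{prop: Aux1 integrals}. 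You instead discretize the \emph{primitive} $H(\tau)=\int_0^\tau h$ at the points $1/k$, so $l_k = H(1/k)-H(1/(k+1))$. This gives $\sum_{k\geq M}l_k=H(1/M)$ for free, which is clean, and your choice of $\sigma(\tau)$ via $\tau^2=\sigma H(\sigma)\int_0^\sigma sh$ is a nice balancing device; the resulting verification of both limits (given your assumptions) is correct.

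The genuine gap, which you flag yourself, is the monotonicity of $\{l_k\}$. In your setup this requires $u\mapsto u^2h(u)$ to be non-decreasing near $0$, and you also use it quantitatively in the bound $h(\sigma/2)\leq 4h(\sigma)$ that feeds into $l_M\leq 16\sigma^2 h(\sigma)$. Proposition~\ref{prop: Aux1 integrals} does not supply this: $h=G'\cdot U(G)$ there, and $\frac{d}{du}\bigl(u^2h(u)\bigr)\geq 0$ amounts to $2h(u)\geq u\lvert G''(u)\rvert U(G(u))+uG'(u)^2\lvert U'(G(u))\rvert$, which has no reason to hold when $\lvert G''\rvert$ is large relative to $G'$. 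Asserting it ``can be arranged by a minor modification'' is not a proof; you would need to show either that the specific $h$ built in Proposition~\ref{prop: Aux1 integrals} has this property, or that one can replace $h$ by some $\tilde h\leq h$ with $u^2\tilde h(u)$ non-decreasing while \emph{retaining} both limit properties of Proposition~\ref{prop: Aux1 integrals} (and the latter is not obviously preserved under decreasing $h$, since $\int_0^\tau \tilde h$ shrinks). This is exactly the difficulty the paper's inverse-function construction avoids: monotonicity of $l_k=c_0 f(k)$ is immediate because $f$ is decreasing. Until that monotonicity is actually established, your bound $\sum_{k\geq M}l_k^2\leq l_M\sum_{k\geq M}l_k$ and the estimate on $l_M$ both hang in the air. (A minor slip: $\tau\mapsto\sup_{s\in(0,\tau]}G(s)$ is non-\emph{de}creasing, not non-increasing; this doesn't affect the argument, but note also that Proposition~\ref{prop: Aux1 integrals} already applies to arbitrary non-negative $G$ with $G(0^+)=0$, so the reduction is unnecessary.)
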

We first prove Proposition~\ref{prop: Aux1 integrals} and then show how to deduce Corollary~\ref{cor: Aux2 sequences} from it. To simplify the proof of Proposition~\ref{prop: Aux1 integrals} it will be convenient to first show that we may assume that $G$ is fairly well-behaved.

\begin{lemma}\label{lem:Regular Modulus of continuity}
  Let $G\colon (0, 1)\to \R$ be a non-negative function with\/ $\lim_{\tau\to 0^\limplus}G(\tau)=0$. There exists a function\/ $\widehat G\in C^\infty((0, 1))$ such that
  \begin{enumerate}[label=(\alph*)]
    \item\label{itm1} $\lim_{\tau\to 0^\limplus}\widehat G(\tau)=0$, 
    \item\label{itm2} $\widehat G(\tau)>G(\tau)$ for all $\tau\in (0, 1)$, 
    \item\label{itm3} $\widehat G'(\tau)>0$ for all $\tau\in (0, 1)$, and
    \item\label{itm4}  $\widehat G''(\tau)<0$ for all $\tau\in (0, 1)$.
  \end{enumerate}
\end{lemma}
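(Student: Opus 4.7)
The construction builds $\widehat G$ as the primitive of a carefully chosen derivative. If $h\in C^\infty((0,1))$ is strictly positive, strictly decreasing, and integrable near $0$, and we set $\widehat G(\tau):=\int_0^\tau h(s)\,ds$, then automatically $\widehat G$ is smooth, $\widehat G'=h>0$, $\widehat G''=h'<0$, and $\widehat G(0^+)=0$. The task therefore reduces to finding such an $h$ whose primitive strictly dominates $G$.

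Using $\lim_{\tau\to0^+}G(\tau)=0$, inductively choose a strictly decreasing sequence $\tau_n\in(0,1/2)$ with $\tau_n\to0$, $\tau_{n+1}\leq\tau_n/4$, and $\sup_{0<s\leq\tau_n}G(s)\leq 2^{-n}$. On $(0,\tau_1]$ define the piecewise-constant candidate
\[
h_0(\tau)=\frac{4\cdot 2^{-k}}{\tau_k-\tau_{k+1}}\qquad\text{for }\tau\in(\tau_{k+1},\tau_k],
\]
and extend $h_0$ to a smooth strictly decreasing function on $(\tau_1,1)$ whose values are large enough to dominate $G$ there (permissible if $G$ is locally bounded on $(0,1)$, a harmless additional assumption in view of the application). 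The inequality $\tau_{k-1}-\tau_k\geq 3(\tau_k-\tau_{k+1})$, an immediate consequence of $\tau_k\leq\tau_{k-1}/4$, implies that $h_0$ is weakly decreasing on $(0,\tau_1]$; telescoping yields
\[
\int_0^{\tau_n}h_0(s)\,ds=\sum_{k\geq n}4\cdot 2^{-k}=8\cdot 2^{-n},
\]
so that for $\tau\in(\tau_{n+1},\tau_n]$ one has $\int_0^\tau h_0\,ds\geq 4\cdot 2^{-n}\geq 4G(\tau)>G(\tau)$.

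It remains to smooth $h_0$ while preserving both strictness and dominance. Round off the jump of $h_0$ at each $\tau_k$ within a window of width $\delta_k\leq 2^{-k}(\tau_k-\tau_{k+1})$ (these windows are disjoint by construction) using a fixed smooth decreasing cutoff equal to $1$ to the left and $0$ to the right of the window. This yields a smooth weakly decreasing majorant $h_1\geq h_0$ whose primitive differs from that of $h_0$ by at most $\sum_k 4\cdot 4^{-k}$, and by a summably small amount on any initial interval $(0,\tau_n]$, so $\int_0^\tau h_1\,ds\to 0$ as $\tau\to 0^+$ and the inequality $\int_0^\tau h_1\,ds\geq \int_0^\tau h_0\,ds>G(\tau)$ is preserved. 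Finally set $h(\tau):=h_1(\tau)+\varepsilon e^{-\tau}$ for a small $\varepsilon>0$; this is smooth, strictly positive, and strictly decreasing, because $h_1'\leq 0$ forces $h'\leq-\varepsilon e^{-\tau}<0$. The corresponding primitive $\widehat G$ then fulfils all four requirements of the lemma.

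The main technical point is that the smoothing of $h_0$ must not disturb either the limit $\widehat G(0^+)=0$ or the strict dominance $\widehat G>G$. Both are controlled by the geometric factor of $4$ built into the dominance margin at scale $\tau_n$ and by the rapid shrinkage of the smoothing windows with $k$, which ensures the extra $L^1$-mass introduced at scale $\tau_k$ is much smaller than the dominance margin at that scale.
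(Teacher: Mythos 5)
Your approach is genuinely different from the paper's. The paper first replaces $G$ by the infimum of its affine majorants, $\bar G(\tau)=\inf\{a\tau+b:a,b\geq 0,\ G(s)\leq as+b\ \mbox{for all }s\in(0,1)\}$, which is automatically concave and non\nobreakdash-decreasing and still vanishes at $0$; it then mollifies with a bump supported in $(1,2)$ to regain smoothness without losing concavity or monotonicity, and finally adds $\sqrt{\tau}$ to turn the weak inequalities $\widehat G'\geq 0$, $\widehat G''\leq 0$ into strict ones. You instead build $\widehat G$ as a primitive $\int_0^\tau h$ of an explicitly constructed decreasing density $h$, so the monotonicity and concavity requirements are imposed on $h$ rather than recovered from an envelope. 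Both routes are valid; the paper's is shorter because the affine envelope produces concavity for free, while yours is more constructive and makes the quantitative dominance (a factor $4$ between $\int_0^\tau h_0$ and $\sup_{s\leq\tau_n}G$) transparent. Note also that both arguments implicitly require $G$ to be bounded on $[\delta,1)$ for every $\delta>0$ (the paper needs $\sup_{0<s<1}(G(s)-b)_\limplus/s<\infty$), so the extra hypothesis you flag is of the same nature as the paper's unstated one and, as you say, harmless in the application.

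There is, however, one genuine gap in your extension step. After the construction on $(0,\tau_1]$ you have $\int_0^{\tau_1}h_0=4$, and since $h$ must remain decreasing its values on $(\tau_1,1)$ cannot exceed $h_0(\tau_1)$; hence $\int_0^\tau h$ is pinned near $4$ for $\tau$ slightly above $\tau_1$. Your choice of $\tau_1$ only controls $\sup_{0<s\leq\tau_1}G(s)\leq 1/2$ and says nothing about $G$ just to the right of $\tau_1$, which could well exceed $4$; in that case the strict dominance $\int_0^\tau h>G(\tau)$ fails for $\tau$ slightly above $\tau_1$, and no admissible (decreasing) extension of $h$ can repair it, so the claim that the extension is ``permissible'' under local boundedness is not correct as stated. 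The fix is cheap: replace the numerator $4$ in the definition of $h_0$ by a constant $C$ with $C>\max\{1,\ \sup_{\tau_1\leq s<1}G(s)\}$, finite once $G$ is assumed bounded away from $0$ (which, as noted, you need anyway). Then $\int_0^{\tau_1}h_0=C$ already exceeds $G$ on $[\tau_1,1)$, the dominance on $(0,\tau_1]$ only used $C>1$, the smoothing estimates are unchanged up to replacing $4\cdot4^{-k}$ by $C\cdot4^{-k}$, and the rest of your argument goes through verbatim.
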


\begin{proof}[Proof of Lemma~\ref{lem:Regular Modulus of continuity}]
  Fix $G$ as in the lemma. Define, for $\tau >0$, 
  \begin{equation*}
    \bar G(\tau)= \inf\{a\tau + b : a\geq 0, b\geq 0, G(s)\leq a s+b \mbox{ for all } s\in (0, 1)\}\, .
  \end{equation*}
  It is clear from the construction that $\bar G$ is non-decreasing, concave, and satisfies $G(\tau)\leq \bar G(\tau)$ for all $\tau\in (0, 1)$. That $\lim_{\tau\to 0^\limplus} \bar G(\tau)=0$ follows if we can prove that for every $b>0$ there exists an $a\geq 0$ such that $G(s)\leq a s + b$ for all $s\in [0, 1]$. Since $G(s)-b$ is negative for $s$ small enough, the choice $a=\sup_{0<s<1}\frac{(G(s)-b)_\limplus}{s}$ works.

  Fix $\phi\in C_0^\infty((1, 2))$ non-negative with $\int_\R \phi(x)\, dx =1$. Let
  \begin{equation*}
     \tilde G(\tau) = \int_\R \phi(x)\bar G(\tau x)\, dx= \frac{1}{\tau}\int_\R \phi(y/\tau)\bar G(y)\, dy\, .
  \end{equation*}
  Since $\phi\in C_0^\infty((1, 2))$ it holds that $\tilde G\in C^\infty((0, 1))$. Since $\phi \geq 0$, $\supp \phi \subseteq [1, 2]$, and $\bar G$ is non-decreasing, 
  \begin{equation}\label{eq: modified G is larger}
    \tilde G(\tau) =\int_\R \phi(x)\bar G(\tau x)\, dx \geq \bar G(\tau)\geq G(\tau)\, , 
  \end{equation}
  and similarly
  \begin{equation}\label{eq: modified G tends to zero}
    \lim_{\tau\to 0^\limplus} \tilde G(\tau) = \lim_{\tau\to 0^\limplus}\int_\R \phi(x)\bar G(\tau x)\, dx \leq \lim_{\tau\to 0^\limplus} \bar G(2\tau)=0\, .
  \end{equation}
  Furthermore, $\tilde G$ is increasing and concave, both of which are consequences of the corresponding properties for $\bar G$. Indeed, for $0<\tau_1<\tau_2\leq 1$, 
  \begin{equation}\label{eq: modified G is increasing}
    \tilde G(\tau_2)-\tilde G(\tau_1) = \int_\R \phi(x)\bigl(\bar G(\tau_2x)-\bar G(\tau_1x)\bigr)\, dx \geq 0, 
  \end{equation}
  and, for any $\alpha \in (0, 1)$, 
  \begin{equation}\label{eq: modified G is concave}
  \begin{aligned}
    \tilde G((1-\alpha)\tau_1+\alpha \tau_2) 
    &= 
    \int_\R \phi(x)\bigl(\bar G((1-\alpha)\tau_1x+\alpha \tau_2x)\bigr)\, dx\\
    &\geq
    \int_\R\phi(x)\bigl((1-\alpha)\bar G(\tau_1x)+\alpha\bar G(\tau_2x)\bigr)\, dx\\
    &=
    (1-\alpha) \tilde G(\tau_1)+ \alpha \tilde G(\tau_2)\, .
  \end{aligned}
  \end{equation}
  From~\eqref{eq: modified G is larger}, ~\eqref{eq: modified G tends to zero}, ~\eqref{eq: modified G is increasing}, and~\eqref{eq: modified G is concave} it follows that $\widehat G(\tau) =\tilde G(\tau)+\sqrt{\tau}$ satisfies the properties claimed in the lemma.
\end{proof}

\begin{proof}[Proof of Proposition~\ref{prop: Aux1 integrals}]
By Lemma~\ref{lem:Regular Modulus of continuity} we can without loss of generality assume that $G$ is smooth, strictly increasing, and concave.

Set $h(\tau) = G'(\tau)U(G(\tau))$ for some non-negative, integrable, and strictly decreasing $U\in C^\infty((0, \|G\|_\infty))$ to be specified. By the assumptions on $G, U$ we have $h\in C^\infty$ and, for $0<\tau_1<\tau_2\leq 1$, 
\begin{equation*}
  h(\tau_1)-h(\tau_2) =  G'(\tau_1)U(G(\tau_1))- G'(\tau_2)U(G(\tau_2)) >
  (G'(\tau_1)- G'(\tau_2))U(G(\tau_1)) \geq 0\, , 
\end{equation*}
thus $h$ is strictly decreasing. Moreover, $h$ is integrable since
\begin{equation*}
  \int_0^1 h(\tau)\, d\tau = \int_0^1 G'(\tau)U(G(\tau))\, d\tau = \int_0^{G(1)}U(y)\, dy <\infty\, .
\end{equation*}
By the same computation we have that
\begin{equation}\label{eq: main term equality}
  \int_0^\tau h(x)\, dx = \int_0^{G(\tau)}U(y)\, dy\, , 
\end{equation}
as such we see that in order to satisfy the first claim of the proposition we simply need to choose $U$ so that $\int_0^s U(y)\, dy \gg s$.  

Since $G$ is increasing and concave we can bound
\begin{equation}\label{eq: concavity bound}
  G(\tau) = \int_0^\tau G'(x)\, dx \geq \tau G'(\tau)\, .
\end{equation}
Using this we wish to estimate terms in the numerator of the second limit.

Since $U\geq 0$ the inequality~\eqref{eq: concavity bound} implies that
\begin{equation}\label{eq: th(t) term bound}
  \tau h(\tau) = \tau G'(\tau)U(G(\tau)) \leq 
  G(\tau)U(G(\tau))\, .
\end{equation}

For the second term an integration by parts yields
\begin{align*}
  \frac{1}{\tau}\int_0^\tau x h(x)\, dx 
  &=
  \frac{1}{\tau}\int_0^\tau x G'(x)U(G(x))\, dx \\
  &=
  - \frac{1}{\tau}\int_0^\tau \Bigl[G(x)U(G(x))+xG(x)G'(x)U'(G(x))\Bigr]\, dx + G(\tau)U(G(\tau))\, , 
\end{align*}
where we used $\lim_{x \to 0^\limplus} x G(x)U(G(x))=0$ by integrability and monotonicity of $U$. Using again~\eqref{eq: concavity bound} to bound the first term in the brackets one finds
\begin{align}
  \frac{1}{\tau}\int_0^\tau x h(x)\, dx 
  &\leq 
  - \frac{1}{\tau}\int_0^\tau \Bigl[x G'(x)U(G(x))+xG(x)G'(x)U'(G(x))\Bigr]\, dx + G(t)U(G(\tau))\nonumber \\
  &=
  - \frac{1}{\tau}\int_0^\tau xG'(x)\Bigl[U(G(x))+G(x)U'(G(x))\Bigr]\, dx + G(\tau)U(G(\tau))\, \label{eq: differential inequality}.
\end{align}

Let $U(y)=y^{-1}\bigl(\log\frac{e^{2}\|G\|_\infty}{y}\bigr)^{-2}$. Note that, for $y\in (0, \|G\|_\infty)$, 
\begin{equation*}
   U'(y) 
   = -\frac{U(y)}{y}\Bigl(1-2\Bigl(\log\frac{e^{2}\|G\|_\infty}{y}\Bigr)^{-1}\Bigr)< 0 \quad \mbox{for all } 0<y < \|G\|_\infty\, .
\end{equation*} 
Moreover, 
\begin{equation}\label{eq: U properties} 
\begin{aligned}
  \int_0^{G(\tau)}U(y)\, dy &= \Bigl(\log\frac{e^2\|G\|_\infty}{G(\tau)}\Bigr)^{-1}\, , \\
  G(\tau)U(G(\tau)) &= \Bigl(\log\frac{e^2\|G\|_\infty}{G(\tau)}\Bigr)^{-2}\, , \ \mbox{and }\\
  U(y)+y U'(y) &= 2 y^{-1}\Bigl(\log\frac{e^2\|G\|_\infty}{y}\Bigr)^{-3} > 0\quad \mbox{for all } 0<y<\|G\|_\infty\, .
\end{aligned}
\end{equation}
Thus, by~\eqref{eq: differential inequality} and~\eqref{eq: th(t) term bound},
\begin{equation}\label{eq: lower order integrals bound}
  \tau h(\tau)+ \frac{1}{\tau}\int_0^\tau xh(x)\,dx \leq \tau h(\tau)+ G(\tau)U(G(\tau)) \leq 2G(\tau)U(G(\tau))\,.
\end{equation}

Combining, \eqref{eq: main term equality} and \eqref{eq: th(t) term bound} with the equations in~\eqref{eq: U properties} we finally obtain that
\begin{equation*}
  \lim_{\tau\to 0^\limplus} \frac{\int_0^\tau h(x)\, dx}{G(\tau)} = \lim_{\tau\to 0^\limplus} G(\tau)^{-1}\Bigl(\log\frac{e^2\|G\|_\infty}{G(\tau)}\Bigr)^{-1} = \infty\, , 
\end{equation*}
and similarly, by~\eqref{eq: lower order integrals bound},
\begin{equation*}
  0\leq \lim_{\tau\to 0^\limplus} \frac{\tau h(\tau)+\frac{1}{\tau}\int_0^\tau x h(x)\, dx}{\int_0^\tau h(x)\, dx} \leq \lim_{\tau\to 0^\limplus}  2\Bigl(\log\frac{e^2\|G\|_\infty}{G(\tau)}\Bigr)^{-1} = 0\, , 
\end{equation*}
which concludes the proof of the proposition.
\end{proof}

\begin{proof}[Proof of Corollary~\ref{cor: Aux2 sequences}]
  Without loss of generality we can assume that $G(\tau)\geq \tau$. By Proposition~\ref{prop: Aux1 integrals} there exists a non-negative strictly decreasing smooth function $h\in L^1((0, 1))$ such that
  \begin{equation*}
    \lim_{\tau\to 0^\limplus} \frac{\int_0^\tau h(x)\, dx}{G(\tau)} =\infty\, , \quad \mbox{and} \quad \lim_{\tau\to 0^\limplus}\frac{\tau h(\tau)+\frac{1}{\tau}\int_0^\tau x h(x)\, dx}{\int_0^\tau h(x)\, dx} =0\, .
  \end{equation*}
  The assumption that $G(\tau)\geq \tau$ implies that $\lim_{\tau\to 0^\limplus}h(\tau)=\infty$.

  Since $h$ is strictly decreasing we can consider its inverse, $h^{-1}\colon (h(1), \infty) \to \R$, which is again strictly decreasing. Define $f\colon (0, \infty) \to \R$ by $f(y)=h^{-1}(y)\1_{y> h(1)}+ \1_{y\leq h(1)}$. Then
  \begin{equation*}
    \int_0^\infty f(y)\, dy = h(1)+\int_{h(1)}^\infty h^{-1}(y)\, dy = h(1)-\int_0^1 x h'(x)\, dx = \int_0^1 h(x)\, dx\, , 
  \end{equation*}
  where we used the fact that $h$ is integrable and monotone to conclude that $\lim_{\tau\to 0^\limplus}\tau h(\tau)=0$. Since the equation $f(y)=\tau$ has a unique solution for each $\tau \in (0, 1)$, the inverse of $f^{-1}\colon (0, 1)\to \R$ is well-defined and by construction $f^{-1}(\tau)=h(\tau)$.

  Since $f$ is decreasing and integrable $\sum_{k\geq 1}f(k)<\infty$. Set $l_k = c_0 f(k)$ with $c_0$ chosen so that $\sum_{k\geq 1}l_k=A$. Define $M(\tau)$ to be the smallest integer such that $M(\tau)\geq f^{-1}(\tau)=h(\tau)$. To complete the proof of the corollary we need to relate the quantities in the statement to the corresponding quantities in Proposition~\ref{prop: Aux1 integrals}.

  Since $f$ is monotone decreasing we can estimate
  \begin{align}\label{eq: l1, l2 bounds}
    \sum_{k\geq M(\tau)} f(k) &\geq \int_{M(\tau)}^\infty f(y)\, dy= \int_{h(\tau)}^\infty f(y)\, dy - \int_{h(\tau)}^{M(\tau)}f(y)\, dy
    \geq 
    \int_{h(\tau)}^\infty f(y)\, dy - \tau\, , \\[5pt]
    \sum_{k\geq M(\tau)} f^2(y) 
    & \leq
    \int_{M(\tau)}^\infty f^2(y)\, dy + f^{2}(M(\tau))
    \leq
    \int_{h(\tau)}^\infty f^2(y)\, dy + \tau^2\, .
  \end{align}

  By the change of variables $x=f(y)$, i.e.\ $y=h(x)$, and an integration by parts
  \begin{equation}\label{eq: int by parts l1}
    \int_{h(\tau)}^\infty f(y)\, dy = -\int_0^\tau x h'(x)\, dx = \int_0^\tau h(x)\, dx -\tau h(\tau)\, , 
  \end{equation}
  and similarly
  \begin{equation}\label{eq: int by parts l2}
    \int_{h(\tau)}^\infty f^2(y)\, dy = -\int_0^\tau x^2 h'(x)\, dx = 2\int_0^\tau x h(x)\, dx -\tau^2h(\tau)\, .
  \end{equation}

  Combining \eqref{eq: l1, l2 bounds}--\eqref{eq: int by parts l2} with the properties of $h$ in Proposition~\ref{prop: Aux1 integrals} and $G(\tau)\geq \tau$ we find
  \begin{equation*}
    \lim_{\tau\to 0^\limplus} \frac{\sum_{k\geq M(\tau)}l_k}{G(\tau)}
    \geq
    c_0 \lim_{\tau\to 0^\limplus} \frac{\int_0^\tau h(x)\, dx -\tau(h(\tau)+1)}{G(\tau)} = \infty\, , 
  \end{equation*}
  and, since $M(\tau)\leq h(\tau)+1$, 
  \begin{equation*}
    0\leq \lim_{\tau\to 0^\limplus} \frac{\tau M(\tau)+ \frac{1}{\tau}\sum_{k\geq M(\tau)}l_k^2}{\sum_{k\geq M(\tau)}l_k} 
    \leq
    \lim_{\tau\to 0^\limplus} \frac{c_0^{-1}\tau(h(\tau)+1) + \frac{c_0}{\tau}\bigl(2\int_0^\tau xh(x)\, dx +\tau^2\bigr)}{\int_0^\tau h(x)\, dx-\tau(h(\tau)+1)}=0\, .
  \end{equation*}
  This completes the proof of Corollary~\ref{cor: Aux2 sequences}.
\end{proof}

\section{Proof of Theorem~\ref{thm:MainRiesz}}
\label{sec:ThmHeat implies ThmRiesz}

In the final section of this note we provide a proof that Theorem~\ref{thm:MainHeat} implies Theorem~\ref{thm:MainRiesz}.

\begin{proof}[Proof of Theorem~\ref{thm:MainRiesz}]
Fix $R$ as in the theorem. Without loss of generality, we may assume that $R$ is bounded. 

Let
$$
g_{0}(t) = \int_0^\infty \mu^{(d-1)/2}R(\mu/t)  e^{-\mu/2}\, d\mu \, 
$$
and $\tilde g_0(t)=\max\{g_0(t), t^{(d+1)/2}\}$. By dominated convergence, $\lim_{t\to0^\limplus} \tilde g_0(t) = 0$. By Theorem~\ref{thm:MainHeat}, there exists an open, bounded, and connected Lipschitz regular set $\Omega_0\subset\R^d$ such that
$$
\limsup_{t\to 0^\limplus} \frac{(4\pi t)^{d/2} \Tr(e^{t\Delta_{\Omega_0}}) - |\Omega_0| + \frac{\sqrt{\pi t}}{2} \mathcal H^{d-1}(\partial\Omega_0)}{\sqrt t \tilde g_0(t)} = \infty \, .
$$

Assume the conclusion of Theorem~\ref{thm:MainRiesz} is wrong. Then there exist $C<\infty$, $\gamma \geq 0$, and $0\leq \lambda_0<\infty$ such that for all $\lambda\geq \lambda_0$, 
$$
\Tr(-\Delta_{\Omega_0}-\lambda)_\limminus^\gamma - L_{\gamma, d}|\Omega_0| \lambda^{\gamma+d/2} + \frac{L_{\gamma, d-1}}{4} \mathcal H^{d-1}(\partial\Omega_0) \lambda^{\gamma+(d-1)/2} \leq C \lambda^{\gamma+(d-1)/2}R(\lambda)\,.
$$

By~\eqref{eq:Laplace transform} we conclude that
\begin{align*}
& \Tr(e^{t\Delta_{\Omega_0}}) - (4\pi t)^{-d/2} |\Omega_0| + \frac{\sqrt{\pi t}}{2} (4\pi t)^{-d/2} \mathcal H^{d-1}(\partial\Omega_0) \\
& = \frac{t^{1+\gamma}}{\Gamma(1+\gamma)} \int_0^\infty \biggl[\Tr(-\Delta_{\Omega_0}-\lambda)_\limminus^\gamma - L_{\gamma, d}|\Omega_0| \lambda^{\gamma+d/2} + \frac{L_{\gamma, d-1}}{4} \mathcal H^{d-1}(\partial\Omega_0) \lambda^{\gamma+(d-1)/2} \biggr] e^{-t\lambda}\, d\lambda \\
& \leq \frac{t^{1+\gamma}}{\Gamma(1+\gamma)} \biggl[C\int_{\lambda_0}^\infty \lambda^{\gamma+(d-1)/2} R(\lambda) e^{-t\lambda}\, d\lambda +  c(t)\biggr]\,,
\end{align*}
where we write
\begin{equation*}
c(t)=\int_0^{\lambda_0}\! \biggl[\Tr(-\Delta_{\Omega_0}-\lambda)^\gamma_\limminus - L_{\gamma, d}|\Omega_0| \lambda^{\gamma+d/2} + \!\frac{L_{\gamma, d-1}}{4} \mathcal H^{d-1}(\partial\Omega_0) \lambda^{\gamma+(d-1)/2}\biggr] e^{-t\lambda}\, d\lambda\, .
\end{equation*}
Using the fact that $\mu^\gamma e^{-\mu/2}\leq \bigl(\frac{2\gamma}{e}\bigr)^\gamma$ for all $\mu\geq 0$,
\begin{align*}
  t^{1+\gamma} \int_{\lambda_0}^\infty \lambda^{\gamma+(d-1)/2} R(\lambda) e^{-t\lambda}\, d\lambda 
  &=
  t^{-(d-1)/2} \int_{\lambda_0 t}^\infty \mu^{\gamma+(d-1)/2} R(\mu/t) e^{-\mu}\, d\mu\\
  &\leq
  \Bigl(\frac{2\gamma}{e}\Bigr)^\gamma t^{-(d-1)/2} \int_{\lambda_0 t}^\infty \mu^{(d-1)/2} R(\mu/t) e^{-\mu/2}\, d\mu\\
  &\leq
  \Bigl(\frac{2\gamma}{e}\Bigr)^\gamma t^{-(d-1)/2} g_0(t)\\
  &\leq
  \Bigl(\frac{2\gamma}{e}\Bigr)^\gamma t^{-(d-1)/2}\tilde g_0(t)\, .
\end{align*}
Moreover, we bound $c(t)$ by discarding the negative volume term and use monotonicity to find
\begin{align*}
c(t) & \leq \biggl( \Tr(-\Delta_{\Omega_0}-\lambda_0)^\gamma_\limminus + \frac{L_{\gamma, d-1}}{4} \mathcal H^{d-1}(\partial\Omega_0) \lambda_0^{\gamma+(d-1)/2} \biggr) \int_0^{\lambda_0} e^{-t\lambda}\, d\lambda \\
& \leq \biggl( \Tr(-\Delta_{\Omega_0}-\lambda_0)^\gamma_\limminus + \frac{L_{\gamma, d-1}}{4} \mathcal H^{d-1}(\partial\Omega_0) \lambda_0^{\gamma+(d-1)/2} \biggr) \lambda_0 \, .
\end{align*}
By construction $\tilde g_0(t) \geq t^{(d+1)/2}$, and therefore
\begin{equation*}
\limsup_{t\to 0^\limplus}\, t^{\gamma+(d+1)/2} \tilde g_0(t)^{-1} c(t) <\infty \, .
\end{equation*}
Thus we have shown that 
\begin{equation*}
  \limsup_{t\to 0^\limplus} \frac{(4\pi t)^{d/2}\Tr(e^{t\Delta_{\Omega_0}})-|\Omega_0|+ \frac{\sqrt{\pi t}}{2}\Haus^{d-1}(\partial\Omega_0)}{\sqrt{t}\tilde g_0(t)}<\infty\,,
\end{equation*}
contradicting the choice of $\Omega_0$. This completes the proof of Theorem~\ref{thm:MainRiesz}.
\end{proof}

\bibliographystyle{amsalpha}

\end{document}